\newcommand{\hra}{\hookrightarrow}
\newcommand{\ra}{\rightarrow}
\newcommand{\CC}{\mathbb C}
\newcommand{\ZZ}{\mathbb Z}
\newcommand{\QQ}{\mathbb Q}
\newcommand{\PP}{\mathbb P}
\newcommand{\FF}{\mathbb F}
\newcommand{\cE}{\mathcal{E}}
\newcommand{\cF}{\mathcal{F}}
\newcommand{\cO}{\mathcal{O}}
\newcommand{\cK}{\mathcal{K}}
\newcommand{\cH}{\mathcal{H}}
\newcommand{\image}{\mbox{Im}}
\newcommand{\Ker}{\mbox{Ker}}
\newcommand{\Pic}{\mbox{Pic}}
\newcommand{\Ext}{\mbox{Ext}}
\newcommand{\Hom}{\mbox{Hom}}
\newcommand{\Coker}{\mbox{Coker}}
\newcommand{\ch}{\operatorname{ch}}
\theoremstyle{plain}
\newtheorem{theorem}{Theorem}[section]
\newtheorem{lem}[theorem]{Lemma}
\newtheorem{prop}[theorem]{Proposition}
\newtheorem{cor}[theorem]{Corollary}
\newtheorem{conj}[theorem]{Conjecture}
\newtheorem{rem}[theorem]{Remark}
\numberwithin{equation}{section}
\begin{document}
\title[Brill-Noether loci in rank 2]{Non-emptiness of Brill-Noether loci in $M(2,K)$}

\author{Herbert Lange}
\author{Peter E. Newstead}
\author{Seong Suk Park}

\address{H. Lange\\Department Mathematik\\
              Universit\"at Erlangen-N\"urnberg\\
              Cauerstrasse 11\\
              D-$91058$ Erlangen\\
              Germany}
              \email{lange@mi.uni-erlangen.de}
\address{P.E. Newstead\\Department of Mathematical Sciences\\
              University of Liverpool\\
              Peach Street, Liverpool L69 7ZL, UK}
\email{newstead@liv.ac.uk}
\address{Seong Suk Park\\Facultad de Formacion del Profesorado y Educacion\\
C/Francisco Tomas y Valiente, 3\\ Universidad Autonoma de Madrid\\
28049 Madrid, Spain}
\email{seong.park@uam.es}
\date{\today}

\thanks{The first two authors are members of the research group VBAC (Vector Bundles on Algebraic Curves). The second author 
would like to thank the Department Mathematik der Universit\"at 
         Erlangen-N\"urnberg for its hospitality}
\keywords{Petri curve, (semi)stable vector bundle, Brill-Noether locus, Hecke correspondence, fundamental class}
\subjclass[2010]{Primary: 14H60; Secondary: 14F05}

\begin{abstract}
Let $C$ be a smooth projective complex curve of genus $g \geq 2$. We investigate the Brill-Noether locus consisting of stable bundles of rank 2 and canonical determinant
having at least $k$ independent sections. Using the Hecke correpondence we construct a fundamental class, which determines the non-emptiness of this locus at least 
when $C$ is a Petri curve. We prove that in many expected cases the Brill-Noether locus is non-empty. For some values of $k$ the result is best possible.
\end{abstract}
\maketitle

\section{Introduction}\label{intro}

Let $C$ be a smooth projective complex curve of genus $g \geq 2$ with canonical bundle $K$. Let $M(2,K)$ be the moduli space of stable bundles of rank 2 with determinant $K$
and $k$ a positive integer. 
The Brill-Noether locus $B(2,K,k)$ in $M(2,K)$ is defined by 
$$B(2,K,k) := \{ E \in M(2,K) \;|\;h^0(E) \geq k \}.
$$
This is a degeneracy locus of expected dimension 
$$
\beta(2,K,k) := 3g-3 - \frac{k(k+1)}{2}
$$
(see \cite{m} and \cite{bf}). Bertram and Feinberg made the following conjecture which Mukai also stated as a problem (see \cite[Problem 4.11]{m} and \cite[Problem 4.8]{m1}).\\

\noindent
{\bf Conjecture.} (i) {\it If $\beta(2,K,k) \geq 0$, then $B(2,K,k) \neq \emptyset$.}\\
(ii) {\it For $C$ a general curve, if $B(2,K,k) \neq \emptyset$, then $\beta(2,K,k) \geq 0$}.\\

Part (ii) of the conjecture was proved by Bertram-Feinberg and Mukai in some low genus cases. Recently a complete proof of (ii) was given 
by Teixidor \cite{t2} by showing that the canonical Petri map, 
which governs the infinitesimal behaviour of $B(2,K,k)$, is injective. It may be noted that the generality hypothesis is necessary. In fact, there exist Petri curves for which (ii) fails 
(see \cite{v}).

The subject of this paper is part (i) of the conjecture. This is known for a general curve when $g \leq 12$ \cite{bf}. More recently, Teixidor \cite{t1} proved the conjecture for a general 
curve under the assumption that  $g\ge\frac{k^2}4$.
Her proof proceeds by deformation from a reducible nodal curve. We adopt a different approach which is hinted at in \cite{bf} and \cite{m1}. The idea is to define a fundamental class
for $B(2,K,k)$ and prove that it is non-zero. The problem with this method is that $M(2,K)$ is only quasiprojective and its natural compactification $\overline M(2,K)$ is singular.
To overcome this, we consider the Hecke correspondence $H$ (see Section 2 for details) which is a smooth projective variety. One can define a Brill-Noether locus $B_H(k)$ in $H$, 
which has a fundamental class $b_H(k)$. Our first main result is\\

\noindent
{\bf Theorem \ref{thm2.5}.} {\it Suppose that $C$ is a general curve of genus $g\ge3$ and $k$ is a positive integer, $(g,k)\ne(4,4)$. Then}
$$
B(2,K,k) \neq \emptyset \iff b_H(k) \neq 0.
$$

\vspace{0.5cm}
In fact, the implication $b_H(k) \neq 0 \Rightarrow B(2,K,k) \neq \emptyset$ is valid on any Petri curve for any $g \geq 2$ except for the cases $g=k=2$ and $g=k=4$.
As a consequence of Theorem \ref{thm2.5} and of the work of \cite{bf} and \cite{m} we obtain a complete proof of part (i) of the conjecture for a general curve $C$ of 
genus $g \geq 3$ when $ k \leq 7$ (Corollary \ref{cor2.10}).

For Petri curves the problem now reduces to showing that $b_H(k) \neq 0$ whenever $\beta(2,K,k)\ge0$. Since  the cohomology of $H$ is completely known, this is a purely computational problem.
The computations are however difficult and we are not able to complete them in all cases. Following some preliminary work in Section 4, we consider the 
case when $g$ is a prime in Section 5. The computations simplify in this case and we can prove\\

\noindent
{\bf Theorem \ref{thm4.3}.} 
{\it Suppose $g$ is an odd prime. If $g-1 \geq \max \left\{ \frac{k(k-1)}{4}, 2k-1 \right\}$, then $b_H(k) \neq 0$}.\\

As a consequence we prove that for $k \geq 8$ and $g \geq g_k$, where $g_k$ is the smallest prime satisfying $g_k - 1 \geq \frac{k(k-1)}{4}$, $B(2,K,k) \neq \emptyset$ 
on any Petri curve $C$ (Corollary \ref{cor4.5}).

The condition $g-1 \geq \frac{k(k-1)}{4}$ is better than Teixidor's condition, but we require a prime number $g_k$ in the statement of Corollary \ref{cor4.5}. However,
in some cases our results improve on those of Teixidor. Moreover, our results apply to an arbitrary Petri curve, while those of Teixidor are valid only for a general 
curve in a somewhat unspecified sense.

When $g-1 < \frac{k(k-1)}{4}$, Theorem \ref{thm4.3} is not applicable, but the earlier part of Section 5 is still valid. The computations are more complicated and 
we have completed them only by using Maple for $10 \leq k \leq 24$ (see Theorem \ref{thm6.1} and the subsequent remarks). These results are considerably stronger than those of Teixidor and in some cases are best possible.

In Section 7, we consider the moduli space $M(2, K(p))$ of stable vector bundles of rank 2 and determinant $K(p)$ and the Brill-Noether locus
$$B(2,K(p),k):=\{E\in M(2,K(p))|h^0(E)\ge k\}.$$
We prove\\

\noindent{\bf Theorem \ref{thm7.1}.}
{\em Let $C$ be a smooth projective complex curve of genus $g \geq 2$ and $p \in C$ a fixed point. For a positive integer $k$, if $b_H(k) \neq 0$, then $B(2,K(p),k) \neq \emptyset$
and every component $X$ has dimension 
$$
\dim(X) \geq \beta(2,K,k) + 1.
$$
If $C$ is general of genus $g \geq 3$, then, for $k \geq 2$,
$$
\dim B(2,K(p),k) \leq \beta(2,K,k) +k.
$$
}

Throughout the paper, $C$ is a smooth projective complex curve of genus $g\ge2$ with canonical bundle $K$. We write $B(1,d,k)$ for the Brill-Noether locus
$$B(1,d,k):=\{M\in\Pic^d(C)|h^0(M)\ge k\}$$
and
$$\beta(1,d,k):=g-k(k-d+g-1)$$
for the expected dimension of $B(1,d,k)$. We recall that $C$ is a {\em Petri} curve if the multiplication map
$$H^0(M)\otimes H^0(K\otimes M^{-1})\ra H^0(K)$$
is injective for every line bundle $M$ on $C$.

\section{Preliminaries}

Let $M(2,K)$ and $B(2,K,k)$ be as in the introduction. The variety $M(2,K)$ has a natural compactification
$\overline M(2,K)$ parametrizing S-equivalence classes of semistable vector bundles of rank 2 and determinant $K$. For $p$ a fixed point of $C$, the moduli space $M(2,K(p))$ is a smooth projective variety and supports a universal bundle $\cE$ on $C \times M(2,K(p))$
such that $\wedge^2 \cE = K(p) \boxtimes L$ with $L$ a line bundle on $M(2,K(p))$. In fact $\Pic(M(2,K(p)))\simeq\ZZ$ and we can choose $L$ to be the ample generator of $\Pic(M(2,K(p)))$.

Let $\cE_p$ denote the restriction of $\cE$ to $\{p\} \times M(2,K(p))$. The Hecke correspondence $H$ is defined by 
$$
H := \PP(\cE_p).
$$
Denote by $\pi_1: H \ra M(2,K(p))$ the natural projection.
The variety $H$ can be thought of as a moduli space of parabolic bundles and parametrizes exact sequences
\begin{equation} \label{eq2.1}
0 \ra F \ra E \ra \CC(p) \ra 0,
\end{equation}
where $E \in M(2,K(p))$ and $\CC(p)$ is the skyscraper sheaf at $p$. In fact, this is a fine moduli space and we have a universal object
$$
0 \ra \cF \ra (1 \times \pi_1)^*\cE \ra \cO_H(1) \ra 0
$$
on $C \times H$, where $\cO_H(1)$ is considered as a sheaf on $C \times H$ supported on $\{p\} \times H$. Note that in the sequence \eqref{eq2.1} the bundle $F$ is always
semistable. So we have a morphism $\pi_2: H \ra \overline M(2,K)$. Over $M(2,K)$ the fibres of $\pi_2$ are isomorphic to $\PP^1$. 

We define the Brill-Noether locus $B_H(k)$ set-theoretically by
$$
B_H(k) := \{ 0 \ra F \ra E \ra \CC(p) \ra 0  \; | \; h^0(F) \geq k \}.
$$
In fact, $B_H(k)$ has the structure of a degeneracy locus and is hence a subscheme of $H$. To see this, fix an effective divisor $D$ of degree $> g-1$ on $C$. We denote the pullback of 
this divisor to $C \times H$ also by $D$. Then we have an exact sequence
$$
0 \ra \cF(-D) \ra \cF(D) \ra \cF(D)/\cF(-D) \ra 0
$$
on $C \times H$. Taking direct images by the projection $p_2: C \times H \ra H$, we have an exact sequence
$$
0 \ra {p_2}_*\cF(D) \ra {p_2}_*(\cF(D)/\cF(-D)) \ra R^1p_2\cF(-D) \ra 0.
$$
Moreover, there is a natural inclusion 
$$
{p_2}_*(\cF/\cF(-D)) \hra {p_2}_*(\cF(D)/\cF(-D)).
$$
Then ${p_2}_*(\cF/\cF(-D))$ and ${p_2}_*(\cF(D))$ are Lagrangian subbundles of 
${p_2}_*(\cF(D)/\cF(-D))$ with respect to the skew-symmetric bihomomorphism
$$
\langle \cdot,\cdot \rangle: {p_2}_*(\cF(D)/\cF(-D)) \times {p_2}_*(\cF(D)/\cF(-D)) \ra \pi_1^*L,
$$
which is induced by $\cE(D) \times \cE(D) \ra \wedge^2 \cE(D) = K(p) \boxtimes L(2D)$. Note that for $x:= (0 \ra F \ra E \ra \CC(p) \ra 0) \in H$ we have
$$
{p_2}_*(\cF(D))_x \cap {p_2}_*(\cF/\cF(-D))_x \simeq H^0(F).
$$
Thus the degeneracy locus of Lagrangian subbundles gives the scheme structure of $B_H(k)$. The expected dimension of $B_H(k)$ is
$$
\beta(2,K,k) +1 = 3g-2-\frac{k(k+1)}{2}.
$$

We complete the section with a description of the cohomology of $H$. Following \cite{n} we can write the Chern classes of $\cE$ in the form
$$
c_1(\cE) = \alpha + (2g-1)\varphi
$$
$$
c_2(\cE) = \chi + \psi + g\alpha \otimes \varphi
$$
where $ \alpha = c_1(L), \; \chi \in H^4(M(2,K(p)),\ZZ), \; \psi \in H^3(M(2,K(p)),\ZZ) \otimes H^1(C,\ZZ)$ and $\varphi \in H^2(C,\ZZ)$ is the fundamental class. We write also
$$
\beta = \alpha^2 -4 \chi
$$
and define $\gamma \in H^6(M(2,K(p)),\ZZ)$ by
$$
\psi^2 = \gamma \otimes \varphi.
$$
We shall be concerned with the subalgebra $A_{M(2,K(p))}$ of $H^*(M(2,K(p)),\QQ)$ generated by $\alpha, \beta$ and $\gamma$, which can be written as 
$$
A_{M(2,K(p))} = \QQ[\alpha, \beta, \gamma]/I_g.
$$
Here the ideal of relations $I_g$ is explicitly described in \cite{kn}. For any polynomial $f \in \QQ[\alpha,\beta,\gamma]$ we denote by $(f)$ the corresponding cohomology class.

Let $h$ be the class of $\cO_H(1)$. Then the corresponding subalgebra $A_H$ of $H^*(H,\QQ)$ is generated by $\alpha, \beta,\gamma$ and $h$. Moreover, $A_H$ is a free module over 
$A_{M(2,K(p))}$ with basis $1,h$ and 
$$
h^2= \alpha h - \frac{\alpha^2 - \beta}{4}.
$$
(Note here that $\alpha$ and $\frac{\alpha^2 - \beta}{4}$ are the Chern classes of $\cE_p$.) For $f \in \QQ[h,\alpha,\beta,\gamma]$ we again denote by $(f)$ the corresponding 
cohomology class.

\begin{lem} \label{hr} 
For all integers $r \geq1$, 
 $$
h^r = \frac{1}{2^r} \left( \sum_{i \, {\small even} \, \leq r} \alpha^{r-i} \beta^{\frac{i}{2}} {r \choose i} + (2h-\alpha) 
\sum_{i \, {\small odd} \, \leq r} \alpha^{r-i} \beta^{\frac{i-1}{2}} {r \choose i} \right).
 $$
\end{lem}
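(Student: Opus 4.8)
The plan is to reduce the defining quadratic relation for $h$ to a square root and then expand by the binomial theorem. From the relation $h^2 = \alpha h - \frac{\alpha^2-\beta}{4}$ recalled above we obtain $4h^2-4\alpha h+\alpha^2=\beta$ in $A_H$, that is,
$$(2h-\alpha)^2 = \beta.$$
Set $u := 2h-\alpha$, so that $h=\tfrac12(\alpha+u)$ and $u^2=\beta$. Then
$$h^r = \frac{1}{2^r}(\alpha+u)^r = \frac{1}{2^r}\sum_{i=0}^{r}\binom{r}{i}\alpha^{r-i}u^{i}.$$
Splitting this sum according to the parity of $i$ and substituting $u^{i}=\beta^{i/2}$ when $i$ is even and $u^{i}=u\,\beta^{(i-1)/2}=(2h-\alpha)\beta^{(i-1)/2}$ when $i$ is odd gives precisely the claimed identity.

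Thus the only things to verify are that $(2h-\alpha)^2=\beta$ holds in $A_H$ — which is immediate from the module structure of $A_H$ over $A_{M(2,K(p))}$ with basis $1,h$ described above (equivalently, from the facts that $\alpha$ and $\frac{\alpha^2-\beta}{4}$ are the Chern classes of $\cE_p$ and that $h$ satisfies the projective bundle relation for $\PP(\cE_p)$) — and that the parity bookkeeping in separating the even- and odd-index terms is carried out correctly, including the placement of the $i=0$ term $\alpha^r$ among the even-index contributions. Both are routine.

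Alternatively one could argue by induction on $r$: the cases $r=1,2$ are immediate, and for the inductive step one multiplies the stated formula for $h^r$ by $h$, uses $hu=\tfrac12\beta+\tfrac12\alpha u$ and $\alpha h=\tfrac12\alpha^2+\tfrac12\alpha u$ to re-expand, and then collects terms via Pascal's rule $\binom{r}{i}+\binom{r}{i-1}=\binom{r+1}{i}$. This works but is messier than the substitution above, so I would present the $u=2h-\alpha$ computation. There is no substantive obstacle here: the entire content of the lemma is the observation that the quadratic relation says $2h-\alpha$ is a square root of $\beta$, after which everything is a formal manipulation of binomial coefficients.
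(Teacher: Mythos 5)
Your argument is correct, but it takes a different (and more conceptual) route than the paper, whose entire proof reads ``This is true for $r=1$ and $r=2$. The result follows by induction on $r$.'' The paper thus relies on exactly the inductive verification you describe as your alternative, with the Pascal-rule bookkeeping left to the reader. Your primary argument instead completes the square in the relation $h^2=\alpha h-\frac{\alpha^2-\beta}{4}$ to get $(2h-\alpha)^2=\beta$, sets $u=2h-\alpha$, and reads off the formula as the binomial expansion of $h^r=\frac{1}{2^r}(\alpha+u)^r$ split by parity of the exponent of $u$; this is legitimate since all classes involved have even degree and so commute. What your approach buys is transparency: it explains where the formula comes from (it is literally $\left(\frac{\alpha+\sqrt{\beta}}{2}\right)^r$ with $\sqrt{\beta}$ realized as $2h-\alpha$ in $A_H$) and replaces an $r$-step induction with a single substitution, whereas the paper's induction is shorter to state but gives no insight and omits all the combinatorial details. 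Both proofs are complete for the purposes of the lemma; yours would be the preferable one to include if any detail were to be written out.
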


\begin{proof}
This is true for $r = 1$ and $r=2$. The result follows by induction on $r$. 
\end{proof}

In general, it is not easy to see whether for a given $f$ the class $(f)$ is zero or not, since the relations among $\alpha, \beta, \gamma, h$ are complicated. However we have the 
following explicit formula when the expected dimension is zero.

\begin{prop} \label{thaddeus} (Thaddeus \cite{th}) When $m + 2n + 3p = 3g-3$, the intersection number is
$$
(\alpha^m \beta^n \gamma^p) = (-1)^{g-p} \frac{g!m!}{(g-p)!q!} 2^{2g-2-p}(2^q -2)B_q,
$$
where $q = m+p+1-g$ and $B_q$ is the $q$th Bernoulli number, defined by
$$
\frac{x}{e^x-1} = 1 - \frac{1}{2}x + \sum_{q \; even} B_q \frac{x^q}{q!}.
$$
We set $B_0 = 1$ and $B_q = 0$ for $q < 0$. 
\end{prop}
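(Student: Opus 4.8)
This is Thaddeus's intersection-number formula \cite{th}, so the ``proof'' in the paper is a citation; here I sketch the shape of an argument along his lines. Write $N := M(2,K(p))$, a smooth projective variety of dimension $3g-3$ whose rational cohomology is generated by $\alpha\in H^2$, $\beta\in H^4$ and $\gamma\in H^6$. The plan is to compute all top-degree monomials in $\alpha,\beta,\gamma$ simultaneously, by connecting $N$ through a chain of explicit birational modifications to spaces whose intersection theory is elementary, and then reading off the answer.

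First I would introduce the moduli spaces $M_\sigma$ of $\sigma$-stable pairs $(E,\phi)$ with $\det E = K(p)$ and $0\neq\phi\in H^0(E)$, as $\sigma$ runs over the range of the stability parameter. Near one end of this range $M_\sigma$ is a projective space of the right dimension (a space of extensions $0\ra\cO\ra E\ra K(p)\ra0$, equivalently $\PP(\Ext^1(K(p),\cO))\cong\PP^{3g-3}$), on which all intersection numbers are immediate. Near the other end there is a morphism $M_\sigma\ra N$ with fibre $\PP(H^0(E))$ over $E$; since $\chi(E)=1$, the generic fibre is a point, so this morphism is birational onto $N$, while the jumping loci are controlled by the symmetric products $S^iC$ of the curve. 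Thus the intersection theory of $N$ can be compared to that of the simpler spaces at the two ends.

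As $\sigma$ crosses a wall, $M_\sigma$ undergoes a flip whose centre is a projective bundle over some $S^iC$; the change in any intersection number is then expressible through Segre and Chern classes of these bundles, hence through the classical cohomology of the symmetric products, which is completely understood. Summing the wall-crossing contributions---a geometric-series manipulation---produces a closed expression for $(\alpha^m\beta^n\gamma^p)$ when $m+2n+3p=3g-3$, and the factor $2^q-2$ together with the Bernoulli numbers $B_q$ emerges from that summation via the generating function $\frac{x}{e^x-1}=1-\frac12 x+\sum_{q\text{ even}}B_q\frac{x^q}{q!}$. An independent route, which I would keep in reserve as a check, is to compute directly with the presentation $A_{M(2,K(p))}=\QQ[\alpha,\beta,\gamma]/I_g$ of \cite{kn}, extracting the top coefficients by a residue/generating-function calculation in the spirit of Zagier, or via the nonabelian-localization residue formula of Witten and of Jeffrey--Kirwan for $N$ regarded as a symplectic quotient.

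The main obstacle, in every version, is the same: organizing the wall-crossing (equivalently, residue) bookkeeping so that the sum over the $S^iC$-contributions collapses to the compact Bernoulli-number expression, and checking the boundary cases---in particular $q\le1$, where $2^q-2$ vanishes and the formula must give $0$, and $p$ near $g$, where several ingredients degenerate. Once the recurrence is set up, what remains is a finite induction on $g$ and a routine, if delicate, computation matching it to the stated closed form.
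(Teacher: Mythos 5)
The paper offers no proof of this proposition: it is quoted verbatim from Thaddeus \cite{th}, so there is nothing internal to compare your argument against, and your opening observation that the ``proof'' here is a citation is exactly right. For the record, though, your sketch describes Thaddeus's \emph{later} stable-pairs/wall-crossing argument (\emph{Stable pairs, linear systems and the Verlinde formula}, Invent.\ Math.\ 117 (1994)), whereas the reference actually cited derives the intersection numbers differently: it takes the Verlinde formula for $\dim H^0$ of powers of the determinant line bundle as input from conformal field theory, computes the same dimensions by Riemann--Roch as a polynomial whose coefficients are the pairings $(\alpha^m\beta^n\gamma^p)$, and extracts the individual numbers using the known relations in the cohomology ring; the Bernoulli numbers arise there from evaluating the trigonometric sums in the Verlinde formula rather than from a geometric series over symmetric products. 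Both routes are valid and both are due to Thaddeus, and your identification of the delicate points (collapsing the contributions from the $S^iC$, and the degenerate cases $q\le 1$ where $2^q-2$ must force the answer to vanish) is accurate; just be aware that what you have outlined is a plausible reconstruction of a different paper's proof, not of the cited one.
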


\section{Brill-Noether loci on the Hecke correspondence}

Our first object in this section is to obtain a formula for the (virtual) fundamental class $b_H(k)$ of $B_H(k)$. 
Set
$$
c_i := c_i(({p_2}_*\cF(D))^{\vee} \otimes \sqrt{L}) + c_i(({p_2}_*\cF|_D)^{\vee} \otimes \sqrt{L}),
$$
where $\sqrt{L}$ is a formal object with Chern class $c(\sqrt{L}) = 1 + \frac{\alpha}{2}$.
Then \cite[equation (4)]{f} says that the virtual fundamental class of $B_H(k)$ is
$$
b_H(k) = \Delta_{k,k-1,\dots,1}(c_i) = \left| \begin{array}{cccc}
                                               c_k & c_{k+1} & \dots & c_{2k-1}\\
                                               c_{k-2} & c_{k-1} & \dots & c_{2k-3} \\
                                               \dots &&& \\
                                               c_{-k+2} & c_{-k+3} & \dots & c_1
                                              \end{array} \right|
                                              $$
From the sequence defining $\cF$ we get that the Chern classes of $\cF$ are given by
$$
c_1(\cF) = \alpha + (2g-2)\varphi \in H^2(C \times H),
$$
$$
c_2(\cF) = \chi + \psi + (h + (g-1)\alpha) \otimes \varphi \in H^4(C \times H).
$$
By standard formulae for tensor product and Grothendieck-Riemann-Roch we have
\begin{equation} \label{e2.1}
 \ch_{2n-1}(({p_2}_*\cF(D))^{\vee} \otimes \sqrt{L}) = \frac{1}{(2n-1)!} \left( \frac{\beta h}{4} - \frac{n-1}{2} \gamma \right) \left( \frac{\beta}{4} \right)^{n-2}_,
\end{equation}
\begin{equation} \label{e2.2}
 \ch_{2n}(({p_2}_*\cF(D))^{\vee} \otimes \sqrt{L}) = \frac{2}{(2n)!} \left( \frac{\beta}{4} \right)^{n} \deg D.
\end{equation}
On the other hand, taking $D=q_1+\ldots,q_{\deg D}$ with all $q_i$ distinct,
\begin{eqnarray*}
 c(({p_2}_*\cF|_D)^{\vee} \otimes \sqrt{L}) &=& c(\oplus_{i=1}^{\deg D} ({p_2}_*\cF|_{q_i})^{\vee} \otimes \sqrt{L}))\\
& = & \prod_{i=1}^{\deg D} c(({p_2}_*\cF|_{q_i})^{\vee} \otimes \sqrt{L})\\
& = & \sum_{n=0}^{\deg D} {\deg D \choose n} \left( - \frac{\beta}{4} \right)^n.
\end{eqnarray*}
Hence 
\begin{equation}  \label{e2.3} 
  c_{2n-1}(({p_2}_*\cF|_D)^{\vee} \otimes \sqrt{L}) = 0,  \quad  c_{2n}(({p_2}_*\cF|_D)^{\vee} \otimes \sqrt{L}) = {\deg D \choose n} \left( - \frac{\beta}{4} \right)^n.
\end{equation}

Note that the cohomology class $b_H(k)$ can be written as a polynomial in $h,\beta,\gamma$ and $\deg D$. As a cohomology class it is independent of $\deg D > g-1$. It follows that $b_H(k)$ 
can be computed by putting $\deg D = 0$. So formulas \eqref{e2.1} to \eqref{e2.3} can be replaced by
\vspace{0.2cm}
\newline
$\eqref{e2.1}' \hspace{2.6cm} \ch'_{2n-1} = \frac{1}{(2n-1)!} \left( \frac{\beta h}{4} - \frac{n-1}{2} \gamma \right) \left( \frac{\beta}{4} \right)^{n-2}_,$
\vspace{0.2cm}
\newline
$\eqref{e2.2}' \hspace{5cm}
\ch'_{2n} = 0,
$
\vspace{0.2cm}
\newline
$\eqref{e2.3}' \hspace{3.5cm}
 c'_{0} = 1, \quad c'_i = 0 \quad \mbox{for} \quad i>0.$
\vspace{0.2cm}
\newline
We can now write $c_0 = 2$ and use $\eqref{e2.1}'$ and $\eqref{e2.2}'$ to define polynomials $c_i= c_i(h,\beta,\gamma)$ for $i \geq 1$ by means of the universal formulae relating Chern characters and Chern classes. We can then
define a polynomial $P_k \in \QQ[h,\beta, \gamma]$ by
\begin{equation} \label{e2.4} 
 P_k(h,\beta,\gamma) = \Delta_{k,k-1,\dots,1}(c_i).
\end{equation}
Moreover, using Lemma \ref{hr} we can express this as
$$
f(\alpha,\beta,\gamma) h + f'(\alpha, \beta, \gamma)
$$
with $f(\alpha,\beta,\gamma),f'(\alpha,\beta,\gamma) \in \QQ[\alpha,\beta,\gamma]$. It is important to note that these polynomials are independent of $g$.

To show that $b_H(k) \neq 0$ we need to prove that the cohomology classes $(f(\alpha,\beta,\gamma))$ and $(f'(\alpha, \beta, \gamma))$ are not both zero.
Since $\alpha, \beta, \gamma$ have degrees $2,4,6$ respectively, the degrees of $f$ and $f'$ are $k(k+1) -2$ and $k(k+1)$ respectively.

\begin{lem} \label{lnonzero}
Suppose that $\beta^i$ is the smallest power of $\beta$ with non-zero coefficient $N$ in $P_k(1,\beta,0)$. Then $i<\frac{k(k+1)}4$ and the coefficient of 
$\alpha^{\frac{k(k+1)}{2} -1 -2i}\beta^i$ in $f(\alpha,\beta,\gamma)$ is non-zero. 
\end{lem}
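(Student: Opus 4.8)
The plan is to reduce the statement to the single structural fact that $h$ divides $P_k(h,\beta,0)$. Since $P_k$ is homogeneous of degree $k(k+1)$ for the weighting $\deg h=2$, $\deg\beta=4$, $\deg\gamma=6$, we may write $P_k(h,\beta,0)=\sum_{j}a_j\,h^{\frac{k(k+1)}{2}-2j}\beta^j$ with $a_j\in\QQ$, the sum running over integers $j\ge0$ with $\frac{k(k+1)}{2}-2j\ge0$; then $P_k(1,\beta,0)=\sum_j a_j\beta^j$, so the hypothesis says precisely that $a_i=N\ne0$ and $a_j=0$ for $j<i$.

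First I would prove that $h\mid P_k(h,\beta,0)$. Setting $\gamma=0$ in the formulas $\eqref{e2.1}'$ and $\eqref{e2.2}'$, every Chern character component $\ch'_m$ with $m\ge1$ is divisible by $h$ --- indeed $\ch'_{2n-1}=\frac{h}{(2n-1)!}\left(\frac{\beta}{4}\right)^{n-1}$ and $\ch'_{2n}=0$. Since for $i\ge1$ the universal polynomial expressing $c_i$ in terms of $\ch'_1,\dots,\ch'_i$ has no constant term, it follows that $c_i=c_i(h,\beta,0)$ is divisible by $h$ for every $i\ge1$. In the determinant defining $P_k$ the first row is $c_k,c_{k+1},\dots,c_{2k-1}$, all of index $\ge k\ge1$, hence each entry of that row is divisible by $h$; therefore $h\mid P_k(h,\beta,0)$. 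Writing $P_k(h,\beta,0)=h\,Q_k(h,\beta)$ with $Q_k$ homogeneous of degree $k(k+1)-2$, every power $\beta^j$ occurring in $P_k(1,\beta,0)=Q_k(1,\beta)$ satisfies $4j\le k(k+1)-2$, whence $i\le\frac{k(k+1)-2}{4}<\frac{k(k+1)}{4}$.

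Next I would compute the relevant coefficient of $f$. By Lemma \ref{hr}, for $r\ge1$ the coefficient of $h$ in $h^r$, viewed as a polynomial in $\alpha,\beta$, is $\frac{1}{2^{r-1}}\sum_{l\ \mathrm{odd},\,l\le r}\binom{r}{l}\alpha^{r-l}\beta^{\frac{l-1}{2}}$. Applying this to each summand $a_j h^{r_j}\beta^j$ of $P_k(h,\beta,0)$, where $r_j=\frac{k(k+1)}{2}-2j$, the contribution of the index $j$ to the monomial $\alpha^{\frac{k(k+1)}{2}-1-2i}\beta^i$ of $f$ comes only from $l=2(i-j)+1$ (the exponent of $\alpha$ then being automatically correct), and $l$ is an admissible odd index with $1\le l\le r_j$ exactly when $j\le i$ --- the bound $l\le r_j$ being automatic because $i<\frac{k(k+1)}{4}$. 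Together with $a_j=0$ for $j<i$, this leaves only $j=i$, which contributes $a_i\cdot\frac{1}{2^{r_i-1}}\binom{r_i}{1}=\frac{N\,r_i}{2^{r_i-1}}$ with $r_i=\frac{k(k+1)}{2}-2i$. By the previous step $r_i\ge1>0$, so this coefficient is non-zero, as claimed.

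The argument is short once the divisibility of $P_k(h,\beta,0)$ by $h$ is noticed; the main point that needs care is the index bookkeeping in the last step, namely checking that, given $i<\frac{k(k+1)}{4}$, the coefficient of $\alpha^{\frac{k(k+1)}{2}-1-2i}\beta^i$ in $f$ receives a contribution from the single term $a_i h^{r_i}\beta^i$ only --- so that it genuinely equals $\tfrac{N r_i}{2^{r_i-1}}$ and no cancellation can occur --- rather than from several of the $a_j h^{r_j}\beta^j$.
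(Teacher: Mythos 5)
Your proof is correct. The second half --- identifying the coefficient of $\alpha^{\frac{k(k+1)}{2}-1-2i}\beta^i$ in $f$ as $\frac{N\ell}{2^{\ell-1}}$ with $\ell=\frac{k(k+1)}{2}-2i$ via Lemma \ref{hr}, with only the term $j=i$ surviving because $a_j=0$ for $j<i$ and the required odd index $l=2(i-j)+1$ is inadmissible for $j>i$ --- is exactly the paper's computation, just with the bookkeeping made explicit. Where you genuinely differ is in the bound $i<\frac{k(k+1)}{4}$: the paper obtains it by a forward reference to Lemma \ref{lem3.6}, which bounds $\deg P_k(1,\beta,0)$ by $\lfloor k^2/4\rfloor$ via a determinant expansion (and invokes Lemma \ref{lem3.5} for the non-vanishing of $P_k(1,\beta,0)$, which in your conditional reading is simply part of the hypothesis). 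You instead note that at $\gamma=0$ every $\ch'_m$ with $m\ge1$ lies in the ideal $(h)$, hence so does every $c_m$ with $m\ge1$ and therefore the whole determinant $P_k(h,\beta,0)$, its first row being $c_k,\dots,c_{2k-1}$; homogeneity of weight $k(k+1)$ then forces every monomial $h^r\beta^j$ to have $r\ge1$, giving $j\le\frac{k(k+1)-2}{4}$. This is sound (one can also confirm the divisibility directly from the recursion of Corollary \ref{cor3.2}) and has the merit of being self-contained within Section 3, avoiding the forward references; the price is a slightly weaker degree bound than $\lfloor k^2/4\rfloor$, which is still exactly what is needed both for the stated inequality and to make the admissibility condition $l\le r_j$ automatic in your final step, and which moreover guarantees $\ell\ge1$ so that the coefficient $\frac{N\ell}{2^{\ell-1}}$ is indeed non-zero.
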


\begin{proof}
Write $\ell=\frac{k(k+1)}2-2i$. Note that, by Lemmas \ref{lem3.5} and \ref{lem3.6} in the next section, $P_k(1,\beta,0)$ is not identically zero and has degree $<\frac{k(k+1)}4$. Moreover $N$ is the coefficient of $h^\ell\beta^i$ in $P_k(h,\beta,\gamma)$. From Lemma \ref{hr}, the coefficient of $\alpha^{\ell -1}\beta^i$ 
in $f(\alpha,\beta,\gamma)$ is $\frac{\ell}{2^{\ell-1}}N$. 
\end{proof}

\begin{lem} \label{lem2.1}
Let $\cK$ be the locus of $S$-equivalence classes of strictly semistable bundles in ${\overline M}(2,K)$. Then $\pi_2^{-1}(\cK)$ has dimension $2g -1$.
\end{lem}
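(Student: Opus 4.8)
The plan is to bound $\dim\pi_2^{-1}(\cK)$ both from above and from below by $2g-1$. Recall first that the points of $\cK$ are the $S$-equivalence classes $[L_1\oplus L_2]$ with $L_1,L_2\in\Pic^{g-1}(C)$ and $L_1\otimes L_2\cong K$; such a class is determined by the unordered pair $\{L_1,L_2\}$, so $\dim\cK=g$. A point of $\pi_2^{-1}(\cK)$ is an extension $0\to F\to E\to\CC(p)\to0$ in which $F$ is strictly semistable of slope $g-1$, and any such $F$ contains a saturated sub-line-bundle $L_1$ of degree $g-1$, with $F/L_1=:L_2$ of degree $g-1$ and $L_1\otimes L_2\cong K$; thus $F$ is an extension of $K\otimes L_1^{-1}$ by $L_1$.

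For the upper bound I would stratify $\pi_2^{-1}(\cK)$ by the line bundle $L_1$. For fixed $L_1$ the bundle $F$ moves in a family of dimension at most $\dim\Ext^1(K\otimes L_1^{-1},L_1)-1=h^1(L_1^2\otimes K^{-1})-1$; since $L_1^2\otimes K^{-1}$ has degree $0$, Riemann--Roch gives $h^1(L_1^2\otimes K^{-1})=g-1$ when $L_1^2\not\cong K$ and $=g$ on the finite set where $L_1^2\cong K$. Letting $L_1$ vary in $\Pic^{g-1}(C)$ (dimension $g$) and adjoining the $\PP^1$ of parabolic structures $\ell\subset F_p$ that produce $E$, one obtains $\dim\pi_2^{-1}(\cK)\le\max\{g+(g-2)+1,\;0+(g-1)+1\}=2g-1$. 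One makes this precise by exhibiting a morphism onto $\pi_2^{-1}(\cK)$ from an incidence variety assembled from a Poincar\'e bundle on $C\times\Pic^{g-1}(C)$, the projectivised $\Ext$-sheaf and the universal extension; the determinant and stability conditions are open and play no role in this estimate.

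For the lower bound I would exhibit an irreducible $(2g-1)$-dimensional subvariety of $\pi_2^{-1}(\cK)$. Over the open set $U\subset\Pic^{g-1}(C)$ where $L_1^2\not\cong K$, the groups $\Ext^1(K\otimes L_1^{-1},L_1)$ have constant dimension $g-1$ and form a vector bundle whose projectivisation $V\to U$ is a $\PP^{g-2}$-bundle carrying a universal extension; the associated $\PP^1$-bundle $W\to V$ of parabolic structures has dimension $g+(g-2)+1=2g-1$, and a point $(L_1,[\xi],\ell)\in W$ determines a Hecke modification, hence a point of $H$ as soon as the resulting $E$ is stable; this point lies in $\pi_2^{-1}(\cK)$ because $F$ is strictly semistable with $S$-equivalence class $[L_1\oplus(K\otimes L_1^{-1})]$. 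The two facts that require proof, and which form the main obstacle, are: (i) a general such $F$ has $L_1$ as its \emph{unique} sub-line-bundle of degree $g-1$ — this follows from the standard $\Hom$-computation, namely $\Hom(L',F)=0$ for every degree-$(g-1)$ line bundle $L'\not\cong L_1$, using $\Hom(L',L_1)=\Hom(L',L_2)=0$ and the injectivity of the coboundary $\Hom(L_2,L_2)\to\Ext^1(L_2,L_1)$, $\mathrm{id}\mapsto[\xi]\neq0$; and (ii) a destabilising sub-line-bundle of $E$ has degree $2g-1$-slope $>\mu(E)$, hence degree $g$, and must be of the form $L'(p)$ with $L'\subset F$ of degree $g-1$ and $(L')_p\subset F_p$ equal to $\ell$, so by (i) the modification $E$ fails to be stable for exactly one value of $\ell$. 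Consequently $E$ is stable on a dense open $W^\circ\subset W$, giving a morphism $W^\circ\to H$ which is generically injective because $F=\ker(E\to\CC(p))$, then $L_1$, then $[\xi]$ are recovered from the point of $H$. Thus $\pi_2^{-1}(\cK)$ contains an irreducible subvariety of dimension $2g-1$, and together with the upper bound this gives $\dim\pi_2^{-1}(\cK)=2g-1$.
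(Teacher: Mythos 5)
Your proof is correct and follows essentially the same route as the paper: parametrise $\pi_2^{-1}(\cK)$ by the line bundle $M=L_1\in\Pic^{g-1}(C)$, the extension class in $\PP(H^1(K^{-1}\otimes M^2))$ (of dimension $g-2$, resp.\ $g-1$ when $M^2\simeq K$), and the $\PP^1$ of Hecke modifications, giving $g+(g-2)+1=2g-1$. The paper stops at this count, whereas you additionally verify generic injectivity (uniqueness of the maximal sub-line-bundle and stability of the generic modification) to secure the lower bound, which is a welcome extra precision but not a different method.
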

\begin{proof}
A typical element in $\pi_2^{-1}(\cK)$ has the form 
\begin{equation}\label{e2.6}
0 \ra F \ra E \ra \CC(p) \ra 0,
\end{equation}
where $F$ lies in an exact sequence
\begin{equation} \label{e2.5}
0 \ra M \ra F \ra K \otimes M^{-1} \ra 0
\end{equation}
with $M$ a line bundle of degree $g-1$. The non-trivial extensions \eqref{e2.5} are classified by $\PP(H^1(K^{-1} \otimes M^2))$
which has dimension $g-2$, unless $M^2 \simeq K$, in which case it has dimension $g-1$. Moreover the non-trivial extensions \eqref{e2.6} are classified by $\PP(\Ext^1(\CC(p),F))$, which has dimension $1$. The result follows.
\end{proof}

\begin{prop} \label{prop2.2}
Suppose that $\beta(2,K,k) = 3g-3 - \frac{k(k+1)}{2} \geq 2g-1$, i.e. $g \geq \frac{k(k+1)}{2} + 2$. Then $B(2,K,k) \neq \emptyset$. 
\end{prop}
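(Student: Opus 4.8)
The plan is to combine the known facts about the Hecke correspondence assembled in Section 2 with a dimension count. We know that $B_H(k)$ is a degeneracy locus inside the smooth projective variety $H$, whose every component has dimension at least the expected dimension $\beta(2,K,k)+1 = 3g-2-\frac{k(k+1)}{2}$. Under the hypothesis $\beta(2,K,k)\ge 2g-1$, i.e.\ $g\ge\frac{k(k+1)}{2}+2$, this expected dimension is at least $2g$, which is strictly larger than $\dim\pi_2^{-1}(\cK)=2g-1$ computed in Lemma \ref{lem2.1}. So the first step is: since $B_H(k)$ is non-empty (a non-empty variety always carries such a degeneracy locus of dimension $\ge$ the expected one, which here is $\ge 0$ — in fact one should first check $B_H(k)\ne\emptyset$, which follows because the expected dimension is non-negative and $H$ is projective, so the degeneracy locus cannot be empty), every component of $B_H(k)$ has dimension $\ge 2g > 2g-1$, hence no component of $B_H(k)$ can be contained in $\pi_2^{-1}(\cK)$.

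The second step is to pass from $B_H(k)$ to $B(2,K,k)$. Take any point $x=(0\to F\to E\to\CC(p)\to0)\in B_H(k)$ with $\pi_2(x)\notin\cK$; such $x$ exists by Step 1. Then $\pi_2(x)$ is the $S$-equivalence class of a stable bundle $F$ of rank $2$ with $\det F=K$, so $\pi_2(x)\in M(2,K)$. By construction of $B_H(k)$ we have $h^0(F)\ge k$, so $\pi_2(x)\in B(2,K,k)$, and therefore $B(2,K,k)\ne\emptyset$.

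The only genuinely delicate point is confirming that $B_H(k)$ is non-empty under the stated hypothesis: a degeneracy locus of expected dimension $\ge 0$ in a projective variety is non-empty provided its fundamental class $b_H(k)$ is non-zero, or — more robustly — one uses that the locus where two Lagrangian subbundles of a fixed symplectic bundle meet in dimension $\ge k$ is cut out by the vanishing of sections of an explicit bundle and, being of codimension at most $\binom{k+1}{2}$ in the projective variety $H$ of dimension $6g-5$, is non-empty as soon as $\binom{k+1}{2}\le 6g-5$, which is implied by $g\ge\frac{k(k+1)}{2}+2$. Once non-emptiness of $B_H(k)$ is in hand, the dimension inequality $\beta(2,K,k)+1\ge 2g > 2g-1 = \dim\pi_2^{-1}(\cK)$ does all the remaining work, and there is no further computation to carry out. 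I would expect the write-up to be short, with the dimension comparison being the entire content.
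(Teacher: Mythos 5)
Your overall architecture --- establish that $B_H(k)$ is non-empty with every component of dimension at least $\beta(2,K,k)+1 \ge 2g$, then invoke Lemma \ref{lem2.1} to conclude that $B_H(k)\not\subset\pi_2^{-1}(\cK)$ and hence that some point of $B_H(k)$ gives a stable $F$ with $\det F=K$ and $h^0(F)\ge k$ --- is exactly the paper's. But the step you yourself flag as ``the only genuinely delicate point'' is precisely where your argument fails, and it is where essentially all of the content of the paper's proof lives. A degeneracy locus of non-negative expected dimension in a projective variety need \emph{not} be non-empty, and your ``more robust'' substitute --- that a locus cut out by a bundle of rank $\binom{k+1}{2}$ has codimension at most $\binom{k+1}{2}$ and is therefore non-empty once $\binom{k+1}{2}\le\dim H$ --- is false: the codimension bound applies only to the non-empty components, and a section of a rank-$r$ bundle on a variety of dimension $>r$ can be nowhere vanishing (its top Chern class is then zero). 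The only correct criterion available here is the one you mention in passing: the locus is non-empty provided its fundamental class $b_H(k)$ is non-zero. So you must actually prove $b_H(k)\ne0$, and you have not.

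This is what the paper's proof does, and it is where the hypothesis $g\ge\frac{k(k+1)}{2}+2$ is used a second time, beyond the dimension count: it gives $k(k+1)<2g$, and by \cite{kn} there are then no relations among $\alpha,\beta,\gamma$ in $H^{k(k+1)}(M(2,K(p)))$, so the non-vanishing of the cohomology class $(f(\alpha,\beta,\gamma))$ reduces to the non-vanishing of the polynomial $f$ itself. That non-vanishing is Lemma \ref{lnonzero}, which in turn rests on the computations of Section 4 (Lemmas \ref{lem3.5} and \ref{lem3.6}, showing $P_k(1,\beta,0)$ is not identically zero and controlling its degree). Without this cohomological input your proof does not close. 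A minor additional slip: $\dim H=3g-2$ (it is a $\PP^1$-bundle over the $(3g-3)$-dimensional $M(2,K(p))$), not $6g-5$; this does not affect the logic but signals that the numerology should be rechecked.
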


\begin{proof}
We prove first that $b_H(k)$ is not zero.
We know from \cite{kn} that, since $k(k+1) < 2g$, there is no relation among the cohomology classes $\alpha, \beta, \gamma$ in 
$H^{k(k+1)}(M(2,K(p)))$. 
It follows at once from Lemma \ref{lnonzero} that the cohomology class represented by $f(\alpha,\beta,\gamma)$ is non-zero.
Hence $B_H(k)$ is a non-empty locus of dimension at least 
$$
\beta(2,K,k) + 1 \geq 2g.
$$
So $B_H(k) \not \subset \pi^{-1}_2(\cK)$ by Lemma \ref{lem2.1}, which gives the result.
\end{proof}

\begin{lem} \label{lem2.4}
Let $C$ be a Petri curve and let $\cK$ be the locus of $S$-equivalence classes of strictly semistable bundles in ${\overline M}(2,K)$. 
If either $k \geq 5$ or $g\ge5$ and $k\ge3$, then
$$
\dim \left( \pi_2^{-1}(\cK) \cap B_H(k) \right) < \beta(2,K,k) +1.
$$
\end{lem}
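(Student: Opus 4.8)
The plan is to describe $\pi_2^{-1}(\cK)\cap B_H(k)$ explicitly through extensions and to bound its dimension stratum by stratum. A point of this locus is an exact sequence $0\to F\to E\to\CC(p)\to0$ in which $F$ is strictly semistable with $h^0(F)\ge k$. Choosing a destabilising sub-line-bundle $M\subset F$, semistability forces $\deg M=g-1$; the resulting sequence $0\to M\to F\to K\otimes M^{-1}\to0$, together with $h^0(K\otimes M^{-1})=h^1(M)=h^0(M)$ (since $\chi(M)=0$), gives $h^0(F)\le 2h^0(M)$, so $s:=h^0(M)\ge\lceil k/2\rceil\ge2$. I would stratify $\pi_2^{-1}(\cK)\cap B_H(k)$ by the value of $s$ and bound each of the finitely many strata.

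Fix $s$. The line bundles $M$ of degree $g-1$ with $h^0(M)=s$ form a subvariety of $B(1,g-1,s)$, which on a Petri curve is empty if $g<s^2$ (then the stratum is empty) and otherwise has dimension at most $\beta(1,g-1,s)=g-s^2$. A Petri curve has no theta characteristic with $h^0\ge2$: applying the Petri condition to $N$ with $N^2=K$ would force the symmetric multiplication $H^0(N)\otimes H^0(N)\to H^0(K)$ to be injective, impossible once $h^0(N)\ge2$. Hence, as $s\ge2$, we get $M^2\ne K$, so $\Ext^1(K\otimes M^{-1},M)\cong H^1(M^2\otimes K^{-1})$ has dimension $g-1$. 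Given such an $M$, an extension class $e\in H^1(M^2\otimes K^{-1})$ defines a bundle $F_e$ whose connecting homomorphism $\delta_e\colon H^0(K\otimes M^{-1})\to H^1(M)$ satisfies $h^0(F_e)=s+\dim\Ker\delta_e$. By Serre duality $\delta_e$ corresponds to the symmetric bilinear form $q_e(\sigma,\tau)=\langle\sigma\tau,e\rangle$ on $H^0(K\otimes M^{-1})$, and $e\mapsto q_e$ is the transpose of the multiplication $\operatorname{Sym}^2 H^0(K\otimes M^{-1})\to H^0(K^2\otimes M^{-2})$; so $h^0(F_e)\ge k$ is equivalent to $\rk q_e\le 2s-k$.

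The key estimate is that $\{e\mid\rk q_e\le 2s-k\}$ has codimension at least $\tbinom{k-s+1}{2}$ in $H^1(M^2\otimes K^{-1})$. Granting it, and including the $\PP^1$ of Hecke modifications over each resulting $F$, the $s$-stratum has dimension at most
$$
(g-s^2)+\Bigl((g-2)-\binom{k-s+1}{2}\Bigr)+1 \;=\; 2g-1-s^2-\binom{k-s+1}{2}
$$
(the split bundle $M\oplus K\otimes M^{-1}$ contributes only $(g-s^2)+1$, which is no larger in the relevant range). Since $s^2+\tbinom{k-s+1}{2}$ increases with $s$, this bound is greatest at $s=\lceil k/2\rceil$; substituting and comparing with $\beta(2,K,k)+1=3g-2-\frac{k(k+1)}{2}$, a short computation gives strict inequality whenever $k\ge5$, or $g\ge5$ and $k\ge3$. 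In the remaining cases allowed by the hypotheses one has $g<\lceil k/2\rceil^2$, so every stratum — hence $\pi_2^{-1}(\cK)\cap B_H(k)$ — is empty. The bound is sharp: for $(g,k)=(4,4)$ the Hecke modifications of $M\oplus K\otimes M^{-1}$ with $\deg M=3$ and $h^0(M)=2$ already form a positive-dimensional locus, whereas $\beta(2,K,4)+1=0$, which is why that case must be excluded.

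The real obstacle is the codimension estimate above. A symmetric determinantal locus has codimension at most $\tbinom{k-s+1}{2}$, and equality for the linear family $\{q_e\}$ requires it to meet the symmetric rank strata properly. I would deduce this from the Petri hypothesis by proving that the multiplication $\operatorname{Sym}^2 H^0(K\otimes M^{-1})\to H^0(K^2\otimes M^{-2})$ is injective for the special line bundles $M\in B(1,g-1,s)$ occurring here — equivalently that $e\mapsto q_e$ is surjective onto the space of symmetric forms — so that the preimage of each rank stratum has its expected codimension. Establishing this injectivity, and then running the numerical comparison over the various cases, is where the substance of the argument lies.
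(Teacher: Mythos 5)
Your overall strategy coincides with the paper's: reduce to extensions $0\to M\to F\to K\otimes M^{-1}\to 0$ with $\deg M=g-1$ and $h^0(M)=s\ge\lceil k/2\rceil$, use the Petri hypothesis to bound the locus of such $M$ by $g-s^2$ and to exclude theta characteristics, bound the space of relevant extension classes, add $1$ for the fibre of $\pi_2$, and compare with $\beta(2,K,k)+1$. The numerology you set up is consistent and would even yield a slightly sharper bound than the paper's. The difficulty is exactly the step you flag as ``the substance of the argument'': the claim that $\{e\mid \operatorname{rk} q_e\le 2s-k\}$ has codimension at least $\binom{k-s+1}{2}$ in $H^1(M^2\otimes K^{-1})$. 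For this you need the linear family $e\mapsto q_e$ to meet the symmetric rank strata properly, and you propose to get it by showing that $\operatorname{Sym}^2H^0(K\otimes M^{-1})\to H^0(K^2\otimes M^{-2})$ is injective for every $M\in B(1,g-1,s)$. That does not follow from the Petri condition: Petri concerns the map $H^0(M)\otimes H^0(K\otimes M^{-1})\to H^0(K)$, which is a different multiplication map, and injectivity of $\operatorname{Sym}^2H^0(N)\to H^0(N^2)$ for special $N$ of degree $g-1$ is a maximal-rank-type statement -- at best expected for generic $N$ on a general curve, whereas you would need it for \emph{every} $N$ in the Brill--Noether locus on an arbitrary Petri curve. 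So the proof is incomplete at its decisive point.

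The paper closes the gap by asking for much less. At the critical stratum $s=\lceil k/2\rceil$ with $k=2m$ even, the condition $h^0(F)\ge k$ forces $q_e=0$, i.e.\ $e\in(\operatorname{im}\mu)^{\perp}$ where $\mu:H^0(K\otimes M^{-1})^{\otimes2}\to H^0(K^2\otimes M^{-2})$ is the multiplication map; the elementary Hopf lemma gives $\dim\operatorname{im}\mu\ge 2m-1$, hence the extension classes lie in a linear space of dimension at most $g-2m$, and this weaker bound already suffices. For all higher strata, and for every stratum when $k$ is odd, the trivial bound $\dim H^1(M^2\otimes K^{-1})=g-1$ on the extension space is enough, because $g-s^2$ drops quadratically in $s$. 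If you substitute these two observations for your determinantal codimension estimate, your dimension count goes through without any unproved statement about multiplication maps.
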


\begin{proof}
Suppose that $F$ is a strictly semistable bundle with $h^0(F) \geq k$. Then there exists a line bundle $M$ sitting in an exact sequence \eqref{e2.5}. 
Since $h^0(F) \geq k$ and $h^0(M) = h^0(K \otimes M^{-1})$, we get $h^0(M) \geq \lceil \frac{k}{2} \rceil$.

If $g < \lceil \frac{k}{2} \rceil^2$, then $\beta(1,g-1,\lceil \frac{k}{2} \rceil) < 0$. Since $C$ is Petri, this contradicts the existence of $M$.

So suppose $g \geq \lceil \frac{k}{2} \rceil^2$ and $M \in B(1,g-1,\lceil \frac{k}{2} \rceil)$. Suppose first that $k = 2m$. Then 
$$\dim B(1,g-1,m) = \beta(1,g-1,m) = g - m^2.$$ 
If $h^0(M) =m$, then all the sections of $K \otimes M^{-1}$ must lift to $F$. This implies that the class of \eqref{e2.5} belongs to 
$$
\Ker \left( H^1(K^{-1} \otimes M^2) \ra \Hom ( H^0(K \otimes M^{-1}) \ra H^1(M))\right).
$$
This is dual to the multiplication map 
$$
\mu: H^0(K \otimes M^{-1}) \otimes H^0(K \otimes M^{-1}) \ra H^0(K^2 \otimes M^{-2}).
$$
By the Hopf Lemma,
$$
\dim \image(\mu) \geq 2m-1.
$$
So
$$
\dim \Coker (\mu) \leq \left\{ \begin{array}{lcc}
                                g-2m & if & M^2 \not \simeq K,\\
                                g - 2m + 1 & if & M^2 \simeq K.
                               \end{array}  \right.
$$
In the second case $M$ is a theta characteristic which implies that $m \leq 1$, since $C$ is a Petri curve. So we can ignore this case. 
The line bundle $M$ belongs to $B(1,g-1,m) \setminus B(1,g-1,m+1)$. The corresponding exact sequences \eqref{e2.5} give a contribution to
$\pi_2^{-1}(\cK) \cap B_H(k)$ of dimension $\leq g-m^2 + g -2m = 2g - (m+1)^2 + 1.$
On the other hand, the exact sequences \eqref{e2.5} with line bundles in $B(1,g-1,m+1)$ give a contribution to $\pi_2^{-1}(\cK) \cap B_H(k)$ of dimension 
$\leq g - (m+1)^2 + g = 2g - (m+1)^2$. It follows that
\begin{eqnarray*}
\dim \left( \pi_2^{-1}(\cK) \cap B_H(k) \right) & \leq & 2g - (m+1)^2 + 1 \\
& < & 3g -2 - m(2m+1) = \beta(2,K,k) + 1,
\end{eqnarray*}
since $g \geq m^2 > m^2 -m + 2$ for $m \geq 3$ which holds by hypothesis when $k\ge5$. The inequality $g>m^2-m+2$ holds also for $m=2$ if $g\ge5$.

If $k = 2m+1$, then $M \in B(1,g-1,m+1)$. By assumption $g \geq (m+1)^2$. So $\dim B(1,g-1,m+1) = g - (m+1)^2$ and
\begin{eqnarray*}
\dim \pi_2^{-1}(\cK) &\leq& 2g - (m+1)^2 \\
&<& 3g-2 -(m+1)(2m+1) = \beta(2,K,k) + 1,
\end{eqnarray*}
since $g \geq (m+1)^2 > m(m+1) + 2$ for $m \geq 2$ which holds by hypothesis when $k\ge5$. The inequality $g>m(m+1)+2$ holds also for $m=1$ if $g\ge5$.
\end{proof}

\begin{theorem} \label{thm2.5}
Suppose that $C$ is a general curve of genus $g\ge3$ and $k$ is a positive integer, $(g,k)\ne(4,4)$. Then
$$
B(2,K,k) \neq \emptyset \iff b_H(k) \neq 0.
$$
\end{theorem}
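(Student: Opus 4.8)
My plan is to prove the two implications separately and then dispose of a short list of exceptional pairs $(g,k)$ by hand. The geometric input I would rely on throughout is the behaviour of $\pi_2\colon H\to\overline M(2,K)$ over the stable locus: there $\pi_2$ is a $\PP^1$-bundle, an extension $0\to F\to E\to\CC(p)\to0$ has $F$ stable precisely when it lies outside $\pi_2^{-1}(\cK)$, and the Lagrangian degeneracy-locus scheme structure on $B_H(k)$ restricts over $\pi_2^{-1}(M(2,K))$ to the pull-back of the Brill--Noether scheme $B(2,K,k)$ (because, over a stable $F$, the intersection ${p_2}_*(\cF(D))_x\cap{p_2}_*(\cF/\cF(-D))_x\simeq H^0(F)$ of Lagrangian subbundles is exactly the standard Brill--Noether construction for $F$ relative to $D$). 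In particular $B_H(k)\cap\pi_2^{-1}(M(2,K))$ is a $\PP^1$-bundle over $B(2,K,k)$, so $B(2,K,k)=\emptyset$ is equivalent to $B_H(k)\subseteq\pi_2^{-1}(\cK)$.

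For the implication $b_H(k)\neq0\Rightarrow B(2,K,k)\neq\emptyset$ I would argue as follows. If $b_H(k)\neq0$ then $B_H(k)$ is non-empty, and since it is a degeneracy locus in the smooth projective variety $H$, every component has dimension at least the expected value $\beta(2,K,k)+1$. It then suffices to exhibit a component not contained in $\pi_2^{-1}(\cK)$, for any point of $B_H(k)\setminus\pi_2^{-1}(\cK)$ produces a stable $F$ with $h^0(F)\ge k$, i.e. a point of $B(2,K,k)$. If $g\ge\frac{k(k+1)}{2}+2$ this is immediate from Lemma~\ref{lem2.1}, since then $\dim\pi_2^{-1}(\cK)=2g-1<\beta(2,K,k)+1$ (indeed Proposition~\ref{prop2.2} and its proof already give both sides of the equivalence in this range). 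If $k\ge5$, or if $g\ge5$ and $k\ge3$, it follows from Lemma~\ref{lem2.4}, which (using that a general curve is Petri) gives $\dim(\pi_2^{-1}(\cK)\cap B_H(k))<\beta(2,K,k)+1$, so that no component of $B_H(k)$ can be contained in $\pi_2^{-1}(\cK)$.

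For the reverse implication $B(2,K,k)\neq\emptyset\Rightarrow b_H(k)\neq0$ I would invoke Teixidor's theorem \cite{t2}: on a general curve the canonical Petri map is injective, which forces $\beta(2,K,k)\ge0$ and implies that each component of $B(2,K,k)$ has dimension exactly $\beta(2,K,k)$, with generic point carrying exactly $k$ sections and with $B(2,K,k)$ smooth there. Pulling back along the $\PP^1$-bundle $\pi_2$, a component $X$ of $B(2,K,k)$ yields an irreducible component $Z=\overline{\pi_2^{-1}(X)}$ of $B_H(k)$ of dimension $\beta(2,K,k)+1$, i.e. the expected dimension, along which $B_H(k)$ is generically reduced. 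Restricting the degeneracy-class formula \cite[equation~(4)]{f} to the open set $U\subseteq H$ obtained by deleting the other components of $B_H(k)$, the restricted locus is $Z\cap U$ with its generically reduced, expected-codimension structure, so $b_H(k)|_U=[Z\cap U]\neq0$ in $H^*(U)$ and hence $b_H(k)\neq0$. The pairs $(g,k)$ with $g\ge3$, $(g,k)\neq(4,4)$, not covered by the two items above are $(3,2),(4,2),(3,3),(4,3)$ and $(3,4)$. For $(3,4)$ one has $\beta(2,K,4)=-4<0$, so $B(2,K,4)=\emptyset$ by the established part~(ii) of the conjecture, while $b_H(4)\in H^{20}(H,\QQ)=0$ since $\dim_\CC H=3g-2=7$; both sides are false. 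For the four pairs with $\beta(2,K,k)\ge0$ I would either re-run the estimate of Lemma~\ref{lem2.4} against the loci $B(1,g-1,\lceil k/2\rceil)$ (which are empty or at most $0$-dimensional here) to see that $\dim(\pi_2^{-1}(\cK)\cap B_H(k))<\beta(2,K,k)+1$ still holds, so that the two implications go through verbatim; or simply quote that Conjecture~(i) is known for general curves of genus $\le12$ \cite{bf}, so $B(2,K,k)\neq\emptyset$, and then apply the second implication.

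The step I expect to be the main obstacle is the scheme-theoretic identification, near $\pi_2^{-1}(M(2,K))$, of the Lagrangian degeneracy structure of $B_H(k)$ with the pull-back of the Brill--Noether structure of $B(2,K,k)$; it is precisely this that lets the generic smoothness supplied by \cite{t2} be transported to $B_H(k)$ and hence force $b_H(k)\neq0$. A secondary nuisance is the dimension bookkeeping for the small-genus sporadic pairs, where one must keep track of theta-characteristics and of which sections of $K\otimes M^{-1}$ actually lift.
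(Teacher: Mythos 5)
Your overall architecture matches the paper's: Lemma \ref{lem2.4} plus Teixidor's theorem \cite{t2} in the main range, and a hand check of the small pairs. The genuine gap is in your justification of $B(2,K,k)\neq\emptyset\Rightarrow b_H(k)\neq0$. You delete the ``other components'' of $B_H(k)$ to get an open set $U$ and assert $b_H(k)|_U=[Z\cap U]\neq0$ in $H^*(U)$. That inequality is unjustified: on a non-compact smooth variety the cohomology class of a non-empty closed subvariety of the correct codimension can vanish (a point in $\mathbb{A}^n$ has zero class in $H^{2n}(\mathbb{A}^n)=0$), and on $U$ there is no ample class to pair against. The localization is also unnecessary, because the ingredients you already have show there are no ``other components'' to delete: every component of the degeneracy locus $B_H(k)$ has dimension $\geq\beta(2,K,k)+1$; by Lemma \ref{lem2.4} none is contained in $\pi_2^{-1}(\cK)$, so each meets $\pi_2^{-1}(M(2,K))$, where $B_H(k)$ is set-theoretically the $\PP^1$-bundle over $B(2,K,k)$, of dimension exactly $\beta(2,K,k)+1$ by \cite{t2}. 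Hence every component has exactly the expected dimension, $b_H(k)$ is the class of a non-empty effective cycle with positive multiplicities on the \emph{projective} variety $H$, and pairing with a power of an ample class shows it is non-zero. This is the paper's argument, and it also makes what you call your ``main obstacle'' (the scheme-theoretic identification of the Lagrangian degeneracy structure with the pull-back Brill--Noether structure, used to transport generic reducedness) superfluous: positivity of multiplicities suffices and reducedness is never needed.

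A second, smaller error: in the sporadic cases your parenthetical claim that $B(1,g-1,\lceil k/2\rceil)$ is ``empty or at most $0$-dimensional'' fails for $k=2$, where the relevant locus is $B(1,g-1,1)$, of dimension $g-1$ on a Petri curve. The needed inequality $\dim(\pi_2^{-1}(\cK)\cap B_H(2))<\beta(2,K,2)+1$ is nevertheless still true for $g\geq3$ by running the actual estimate in the proof of Lemma \ref{lem2.4} with $m=1$ (where one must also account for odd theta characteristics instead of discarding the case $M^2\simeq K$); this is how the paper treats $k\leq2$, together with a direct verification via Lemma \ref{hr} that $b_H(k)\neq0$ there. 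Your list of residual pairs and your treatment of $(g,k)=(3,4)$ are correct and agree with the paper.
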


\begin{proof}
Suppose first that either $k\ge5$ or $g\ge5$ and $k\ge3$. If $B(2,K,k) \neq \emptyset$, then it has the expected dimension $\beta(2,K,k)$ by \cite{t2}. It follows from this and Lemma \ref{lem2.4} that $B_H(k)$ has the expected dimension 
and therefore $b_H(k) \neq 0$.
Conversely, if $b_H(k) \neq 0$, then $B_H(k) \neq \emptyset$. By Lemma \ref{lem2.4}, $B_H(k) \not \subset \pi_2^{-1}(\cK)$. So $B(2,K,k) \neq \emptyset$.

If $k=4$ and $g=3$, then, by \cite{bf}, $B(2,K,k)=\emptyset$ and, since there are no line bundles $M$ of degree $2$ with $h^0(M)\ge2$, $B_H(k)$ does not meet $\pi_2^{-1}(\cK)$ and is therefore also empty. Moreover $b_H(k)=0$. If $k=3$ and $g=3$ or $4$, it is proved in \cite{bf} that $B(2,K,k)\ne\emptyset$. Moreover, for $g=4$, the earlier part of the proof still works to show that $b_H(k)\ne0$ since we need only a non-strict inequality in Lemma \ref{lem2.4} for this implication. For $g=3$, as before, $B_H(k)$ does not meet $\pi_2^{-1}(\cK)$ and is therefore of the expected dimension; so $b_H(k)\ne0$.

Finally, suppose $k=1$ or $2$. Then $b_H(k)\ne0$ by direct computation using Lemma \ref{hr} and the argument of Lemma \ref{lem2.4} works to show that $B(2,K,k)\ne\emptyset$ since $g\ge3$.
\end{proof}

\begin{rem}  \label{r2.6}
{\rm The implication $b_H(k)\ne0 \Rightarrow B(2,K,k) \neq \emptyset$ is valid on any Petri curve of genus $g\ge3$, except for the case $g=k=4$. In this case $\beta(2,K,4)=-1$, so $B(2,K,4)=\emptyset$ by \cite{bf}. On the other hand, a Petri curve of genus $4$ possesses two distinct trigonal bundles $T$ and $T'$ and there is a unique stable bundle $E$ fitting into an exact sequence
$$0\to T\oplus T'\to E\to {\mathbb C}(p)\to0.$$
Thus $B_H(4)$ consists of a single point and has the expected dimension; hence $b_H(4)\ne0$.  Note also that there exist Petri curves for which the opposite implication fails.
In fact, one can have $B(2,K,k) \neq \emptyset$ with $\beta(2,K,k)+1 < 0$ (see \cite{v}).}
\end{rem}

\begin{rem} \label{rem2.6}
{\rm For $g =2$, then certainly $B(2,K,k)=\emptyset$ and $b_H(k)=0$ for $k\ge3$. Direct calculations show that $b_H(k)\ne0$ for $k=1$ or $2$. However, by \cite{bgn}, $B(2,K,k) \neq \emptyset$ for $k = 1$, but is empty for $k = 2$.}
\end{rem}

Another consequence of Lemma \ref{lem2.4} concerns the Brill-Noether locus 
$$
\overline B(2,K,k) := \{ [F] \in \overline M(2,K) \;|\; h^0({\rm gr} (F)) \geq k \}
$$
in $\overline M(2,K)$. Here $[F]$ denotes the S-equivalence class of $F$ and ${\rm gr}(F)$ denotes the graded object associated to a Jordan-H\"older filtration of $F$.

\begin{prop} \label{BNbar}
Let $C$ be a Petri curve and either $k \geq 5$ or $g\ge5$ and $k\ge3$. Then $\overline B(2,K,k)$ is the closure of $B(2,K,k)$ in $\overline M(2,K)$. In particular, 
$\overline B(2,K,k) = \emptyset$ whenever $B(2,K,k) = \emptyset$. 
\end{prop}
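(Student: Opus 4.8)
*Let $C$ be a Petri curve and either $k \geq 5$ or $g\ge5$ and $k\ge3$. Then $\overline B(2,K,k)$ is the closure of $B(2,K,k)$ in $\overline M(2,K)$. In particular, $\overline B(2,K,k) = \emptyset$ whenever $B(2,K,k) = \emptyset$.*
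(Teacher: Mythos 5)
Your submission contains no proof at all: it simply restates the proposition. There is nothing to evaluate as an argument, so this must be counted as a complete gap.

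To indicate what is actually needed: the nontrivial content is that every strictly semistable point of $\overline B(2,K,k)$ lies in the closure of the stable locus $B(2,K,k)$. The paper's route is through the Hecke correspondence. First, since $B_H(k)$ is a Lagrangian degeneracy locus, every component of $B_H(k)$ has dimension at least $\beta(2,K,k)+1$; combined with Lemma \ref{lem2.4} (which is where the hypotheses $k\ge5$, or $g\ge5$ and $k\ge3$, enter), no component of $B_H(k)$ can lie entirely over $\overline M(2,K)\setminus M(2,K)$. Second, one must check that $\pi_2(B_H(k))=\overline B(2,K,k)$, i.e.\ that every point of $\overline B(2,K,k)\setminus B(2,K,k)$ is hit: such a point is represented by $F=M\oplus(K\otimes M^{-1})$ with $h^0(M)\ge\lceil k/2\rceil$, the case $M\simeq K\otimes M^{-1}$ is excluded because a Petri curve has no theta characteristic with enough sections, and a general elementary transformation $0\to F\to E\to\CC(p)\to0$ then has $E$ stable, giving a point of $B_H(k)$ mapping to $[F]$. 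Putting these together, every point of $\overline B(2,K,k)$ lies in the image of a component of $B_H(k)$ that meets $\pi_2^{-1}(M(2,K))$, whence the closure statement. None of this appears in your proposal, so you should supply it.
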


\begin{proof}
It follows from the structure of $B_H(k)$ as a degeneracy locus that every component has dimension $\geq \beta(2,K,k) + 1$. By Lemma \ref{lem2.4} there are
no components of $B_H(k)$ lying entirely in 
$$
\pi_2^{-1}(\overline M(2,K) \setminus M(2,K)).
$$
Note that any point of $\overline B(2,K,k) \setminus B(2,K,k)$ can be represented by a bundle $F = M \oplus (K \otimes M^{-1})$, where $M$ has degree $g-1$ 
and $h^0(M) \geq \lceil \frac{k}{2} \rceil$. If $M \simeq K \otimes M^{-1}$, then $M$ is a theta characteristic, which is impossible on a Petri curve.
Otherwise, the general elementary transformation $0 \ra F \ra E \ra \CC(p) \ra 0$ has $E$ stable and defines a point of $H$. So
$$
\pi_2(B_H(k)) = \overline B(2,K,k)
$$
and the result follows.
\end{proof}

The following proposition is very useful.

\begin{prop} \label{prop2.7}
Let $\cH_g$ denote the Hecke correspondence between $M(2,K)$ and $M(2,K(p))$ for a curve of genus $g$. Suppose that the class $b_H(k)$ is non-zero in $H^*(\cH_{g_0})$ for an integer $g_0 > 0$. 
Then it is non-zero in $H^*(\cH_g)$ for all $g \geq g_0$.
\end{prop}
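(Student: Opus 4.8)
The plan is to reduce the assertion to a single algebraic fact about the ideals of relations, namely that $I_g\subseteq I_{g_0}$ whenever $g\ge g_0$, and then to extract that fact from the explicit presentation of $I_g$ in \cite{kn}.

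First I would recall what has already been set up. The class $b_H(k)$ is represented in $A_H$ by an expression of the form $(f(\alpha,\beta,\gamma))\,h+(f'(\alpha,\beta,\gamma))$ with $f,f'\in\QQ[\alpha,\beta,\gamma]$ \emph{independent of $g$}, and $A_H$ is a free module over $A_{M(2,K(p))}=\QQ[\alpha,\beta,\gamma]/I_g$ with basis $1,h$. Since $A_H$ is a subalgebra of $H^*(\cH_g,\QQ)$, the class $b_H(k)$ is non-zero in $H^*(\cH_g)$ if and only if it is non-zero in $A_H$, and by freeness this holds if and only if $f\notin I_g$ or $f'\notin I_g$. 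Hence the proposition is equivalent to the implications
$$
f\notin I_{g_0}\ \Longrightarrow\ f\notin I_g,\qquad f'\notin I_{g_0}\ \Longrightarrow\ f'\notin I_g\qquad(g\ge g_0),
$$
and these are immediate once we know $I_g\subseteq I_{g_0}$ for $g\ge g_0$: if $f\in I_g$ and $I_g\subseteq I_{g_0}$, then $f\in I_{g_0}$, contrary to hypothesis, and similarly for $f'$.

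It therefore remains to prove $I_{g+1}\subseteq I_g$ for every $g$, since iterating this gives $I_g\subseteq I_{g_0}$. For this I would appeal to the description of $I_g$ in \cite{kn}: the ideal $I_g$ is generated by three explicit polynomials in $\QQ[\alpha,\beta,\gamma]$, of degrees $2g$, $2g+2$, $2g+4$, forming a genus-independent family, and the recursion among these (again from \cite{kn}) expresses the three generators of $I_{g+1}$ as $\QQ[\alpha,\beta,\gamma]$-linear combinations of the three generators of $I_g$; hence $I_{g+1}\subseteq I_g$. (Equivalently, $I_g$ is generated by the Mumford-type relations $\zeta_r$ for $r\ge g$, where each $\zeta_r$ is a fixed polynomial of degree $2r$; then the generating set of $I_{g+1}$ is literally a subset of that of $I_g$.) The proposition follows.

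The whole content sits in the inclusion $I_{g+1}\subseteq I_g$; everything else is bookkeeping on the free module $A_H$. The care needed there is in matching the indexing and normalisation of the generators of $I_g$ used in \cite{kn} with the conventions for $\alpha,\beta,\gamma$ adopted here, and in checking that nothing degenerates for small $g$ (for instance when $\cH_g$ or $M(2,K(p))$ has very small dimension). In that range, however, the proposition is only ever applied in cases where $b_H(k)$ has already been determined by direct computation, as in Remarks \ref{r2.6} and \ref{rem2.6}, so these low-genus subtleties do not actually cause trouble.
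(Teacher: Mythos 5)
Your proposal is correct and follows essentially the same route as the paper: both reduce the statement to the genus-independence of $f$ and $f'$ together with the inclusion $I_g\subseteq I_{g_0}$ for $g\ge g_0$, which the paper simply cites as Lemma 3.1 of \cite{kn} while you sketch its derivation from the generators of $I_g$. No gaps.
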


\begin{proof}
By assumption, either $f(\alpha, \beta, \gamma)$ or $f'(\alpha, \beta, \gamma)$
does not belong to the ideal $I_g$ of relations. We recall from \cite[Lemma 3.1]{kn} that $I_g \subset I_{g_0}$ for all $g \geq g_0$.
Therefore, for all $g \geq g_0$, either 
$f(\alpha, \beta, \gamma)$ or $f'(\alpha, \beta, \gamma)$ does not belong to $I_g$. This implies the assertion.
\end{proof}

\begin{cor} \label{cor2.9}
Suppose that $B(2,K,k) \neq \emptyset$ for a general curve of genus $g_0$. Then $B(2,K,k) \neq \emptyset$ for a general curve of any genus $g \geq g_0$.
\end{cor}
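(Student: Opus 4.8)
The plan is to run the hypothesis through the equivalence of Theorem \ref{thm2.5}, push the resulting cohomological statement up in genus by Proposition \ref{prop2.7}, and then run it back through Theorem \ref{thm2.5}.

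First I would handle the base case $g_0 = 2$ separately. By Remark \ref{rem2.6}, a general (indeed every) curve of genus $2$ has $B(2,K,k) \neq \emptyset$ only for $k = 1$, and in that case $b_H(1) \neq 0$ in $H^*(\cH_2)$. Proposition \ref{prop2.7} then gives $b_H(1) \neq 0$ in $H^*(\cH_g)$ for every $g \geq 2$, and Theorem \ref{thm2.5} yields $B(2,K,1) \neq \emptyset$ for a general curve of genus $g$ whenever $g \geq 3$; the case $g = 2$ is the hypothesis itself. So from now on I may assume $g_0 \geq 3$.

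Next I would dispose of the exceptional pair $(g,k)=(4,4)$. Since $B(2,K,4) = \emptyset$ on every curve of genus $3$ or $4$ by \cite{bf}, the hypothesis with $g_0 \in \{3,4\}$ would be vacuous unless $k \neq 4$; hence $(g,k) \neq (4,4)$ for every $g \geq g_0$ (if $g_0 \geq 5$ this is automatic, and otherwise $k \neq 4$). Now the forward implication of Theorem \ref{thm2.5}, applied at genus $g_0$, turns the hypothesis into $b_H(k) \neq 0$ in $H^*(\cH_{g_0})$; Proposition \ref{prop2.7} propagates this to $b_H(k) \neq 0$ in $H^*(\cH_g)$ for all $g \geq g_0$; and the reverse implication of Theorem \ref{thm2.5} at genus $g$ --- legitimate because $(g,k) \neq (4,4)$ --- gives $B(2,K,k) \neq \emptyset$ for a general curve of genus $g$.

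I do not anticipate a genuine obstacle: all the mathematical content is already contained in Theorem \ref{thm2.5} (ultimately resting on Teixidor's injectivity result \cite{t2}) and in the nesting $I_g \subseteq I_{g_0}$ of the relation ideals from \cite{kn} that underlies Proposition \ref{prop2.7}. The only thing that needs attention is the book-keeping around the exceptional pair $(4,4)$ and the genus-$2$ base case, and in both situations the relevant instance of the hypothesis is either vacuous or already settled, as indicated above.
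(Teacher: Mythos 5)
Your proof is correct and follows essentially the same route as the paper, whose own proof is the one-line observation that the statement follows from Theorem \ref{thm2.5}, Remark \ref{rem2.6} and Proposition \ref{prop2.7}. Your write-up simply makes explicit the book-keeping (the genus-$2$ base case via Remark \ref{rem2.6} and the vacuousness of the hypothesis in the excluded case $(g,k)=(4,4)$) that the paper leaves implicit.
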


\begin{proof}
 This follows immediately from Theorem \ref{thm2.5}, Remark \ref{rem2.6} and Proposition \ref{prop2.7}.
\end{proof}

\begin{cor}  \label{cor2.10}
Suppose $k \leq 7$ and let $C$ be a general curve of genus $g \geq 3$ such that $\beta(2,K,k) \geq 0$. Then $B(2,K,k) \neq \emptyset$.
\end{cor}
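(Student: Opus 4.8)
The plan is to reduce the statement to a finite list of base cases in $(g,k)$ together with the genus-monotonicity already established. First observe that for $k\le 7$ the inequality $\beta(2,K,k)\ge0$ amounts to $3g-3\ge \frac{k(k+1)}{2}$, which for each fixed $k$ is satisfied for all sufficiently large $g$; so it suffices to treat the finitely many pairs $(g,k)$ with $3\le g$, $k\le 7$ and $3g-3\ge\frac{k(k+1)}{2}$, and then invoke Corollary \ref{cor2.9} to propagate non-emptiness upward in $g$. Thus the real content is: for each $k\in\{1,\dots,7\}$ and the smallest admissible genus $g_k=\lceil 1+\frac{k(k+1)}{6}\rceil$ (and perhaps a few genera just above it, if the result is not already known there), show $B(2,K,k)\ne\emptyset$ for a general curve.

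Next I would dispatch the bulk of these cases by citing the existing literature. By hypothesis the problem is precisely part (i) of the Bertram--Feinberg/Mukai conjecture, and the introduction records that \cite{bf} proves it for a general curve when $g\le 12$, while \cite{m} handles further low-genus situations. For $k\le 7$ one has $g_k\le \lceil 1+\frac{56}{6}\rceil=11\le 12$, so the minimal admissible genus always falls within the range covered by \cite{bf}; hence $B(2,K,k)\ne\emptyset$ for a general curve of genus $g_k$ whenever $\beta(2,K,k)\ge0$. Applying Corollary \ref{cor2.9} then gives $B(2,K,k)\ne\emptyset$ for a general curve of every genus $g\ge g_k$, which is exactly the range where $\beta(2,K,k)\ge0$. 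The small cases $k\le 2$ are in any event covered directly by Theorem \ref{thm2.5} and the direct computations mentioned there (together with Remark \ref{rem2.6}), and the cases $(g,k)=(3,3),(4,3)$ by \cite{bf} as noted in the proof of Theorem \ref{thm2.5}.

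The one pair that needs separate attention is $(g,k)=(4,4)$, which is excluded from Theorem \ref{thm2.5}: here $\beta(2,K,4)=3\cdot4-3-10=-1<0$, so this pair does not satisfy the hypothesis $\beta(2,K,k)\ge0$ and simply does not arise. (Compare Remark \ref{r2.6}.) So no case with $k\le7$ and $\beta(2,K,k)\ge0$ is left uncovered, and combining the cited base cases with Corollary \ref{cor2.9} completes the argument.

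The main obstacle is essentially bookkeeping rather than mathematics: one must check carefully that for every $k\le 7$ the smallest genus $g_k$ with $\beta(2,K,k)\ge0$ indeed lies in the range $g\le 12$ treated in \cite{bf} (and that the relevant cases are genuinely asserted there, not merely for $M(2,\cO)$ or with extra hypotheses), and that the pair $(g,k)=(4,4)$ — the lone exclusion in Theorem \ref{thm2.5} — is automatically outside the hypothesis. Once these verifications are in place, Corollary \ref{cor2.9} does all the remaining work of extending to large genus.
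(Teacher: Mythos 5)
Your proposal is correct and follows essentially the same route as the paper: establish the base case at the smallest genus $g_0$ with $\beta(2,K,k)\ge0$ by citing \cite{bf} and \cite{m} (noting $g_0=11$ for $k=7$, within the range those references cover), then propagate to all larger genera via Corollary \ref{cor2.9}. Your extra checks (that $(g,k)=(4,4)$ has $\beta<0$ and so never arises, and that $g_0\le 11$ for all $k\le7$) are sensible bookkeeping the paper leaves implicit.
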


\begin{proof}
By \cite{bf} and \cite{m}, $B(2,K,k) \neq \emptyset$ for the smallest value $g_0$ of the genus for which $\beta(2,K,k) \geq 0$. The result follows from Corollary \ref{cor2.9}. 
Note that for $k = 7$ we have $g_0 = 11$, which is covered by these references.
\end{proof}

\begin{rem} \label{rem2.11}
{\rm For $k = 8$ we need $g_0 \geq 13$. Teixidor's result in \cite{t2} would require $g \geq 16$. The case $k=8, \; g = 13$ seems to be the first unknown case.} 
\end{rem}

\section{Computation of $P_k$}
Let $c(t)=\sum_{n\ge0}c_nt^n$, where the $c_n$ are defined by \eqref{e2.1}$'$-\eqref{e2.3}$'$. Noting that $c_0=2$, we can write formally 
$$c(t)=1 + \prod_{\alpha} (1 + \lambda_{\alpha} t),$$
where the $\lambda_\alpha$ denote the Chern roots corresponding to the Chern character $\ch'$.
\begin{lem} \label{lem3.1}
 $$
 \left( 1 - \frac{\beta}{4}t^2 \right)^2 \frac{d}{dt} (c(t)) = (c(t) - 1) \left( h \left(1 - \frac{\beta}{4}t^2 \right) - \gamma \frac{t^2}{2} \right)
 $$
\end{lem}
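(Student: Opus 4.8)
The plan is to work with the generating function $c(t) = 1 + \prod_\alpha(1 + \lambda_\alpha t)$ and to extract the differential equation directly from the explicit formulas for the Chern character components $\ch'_n$. Writing $p(t) = \prod_\alpha(1+\lambda_\alpha t) = c(t) - 1$, the standard relation between a product and its logarithmic derivative gives
$$
\frac{p'(t)}{p(t)} = \sum_\alpha \frac{\lambda_\alpha}{1+\lambda_\alpha t} = \sum_{n\ge 1} (-1)^{n-1}\, \mathrm{ch}_n\!\cdot\! t^{n-1} \cdot (\text{appropriate factorial normalization}),
$$
so the first step is to assemble $\sum_{n\ge1} s_n t^{n-1}$, where $s_n$ denotes the $n$th power sum of the $\lambda_\alpha$, from the data $\ch'_n = \frac{1}{n!}(\text{stuff})$; concretely $s_n = (-1)^{n-1} n!\, \ch'_n$ up to sign conventions, which one reads off from Newton's identities or from $\mathrm{ch} = \sum s_n/n!$ with signs twisted by the dual.

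The key computation is then to evaluate $\sum_{n\ge1} s_n t^{n-1}$ in closed form using $\eqref{e2.1}'$ and $\eqref{e2.2}'$: since $\ch'_{2n} = 0$ for all $n$, only the odd terms survive, and $\ch'_{2n-1} = \frac{1}{(2n-1)!}\bigl(\frac{\beta h}{4} - \frac{n-1}{2}\gamma\bigr)\bigl(\frac{\beta}{4}\bigr)^{n-2}$. Substituting, the sum $\sum_{n\ge1} s_{2n-1} t^{2n-2}$ splits into two geometric-type series in the variable $\frac{\beta}{4}t^2$: one with coefficient $h$ and one (from the $\frac{n-1}{2}\gamma$ term) which is a derivative of a geometric series. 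Summing these gives something of the shape $\dfrac{h}{1 - \frac{\beta}{4}t^2} - \dfrac{\gamma t^2/2}{(1-\frac{\beta}{4}t^2)^2}$ (after clearing the $(\frac{\beta}{4})^{n-2}$ offset and handling the low-order terms $n=1,2$ by hand). Multiplying through by $p(t) = c(t)-1$ and by $(1-\frac{\beta}{4}t^2)^2$ to clear denominators yields exactly
$$
\bigl(1-\tfrac{\beta}{4}t^2\bigr)^2 \frac{d}{dt}(c(t)) = (c(t)-1)\Bigl(h\bigl(1-\tfrac{\beta}{4}t^2\bigr) - \gamma\tfrac{t^2}{2}\Bigr),
$$
using $\frac{d}{dt}c(t) = p'(t)$.

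The main obstacle I anticipate is bookkeeping at the bottom of the range: the formula for $\ch'_{2n-1}$ as written involves $(\frac{\beta}{4})^{n-2}$, which is a genuine negative power when $n=1$, so the cases $n=1$ and $n=2$ must be read as giving $\ch'_1 = h$ and $\ch'_3 = \frac{1}{6}(\frac{\beta h}{4} - \frac{\gamma}{2})$ respectively, and one must check these match the coefficients produced by the two series on the right-hand side. A secondary nuisance is getting the sign and factorial conventions in $s_n$ versus $\ch'_n$ exactly right (the dual $(\cdot)^\vee$ flips signs of odd Chern characters), but this is mechanical. Once the two low-order terms are verified and the geometric series are summed, comparing coefficients of $t^{m}$ on both sides is routine, so I would present the series identity, the closed-form evaluation, and then simply state that cross-multiplying gives the claim.
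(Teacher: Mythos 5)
Your proposal is correct and is essentially the paper's own proof: the paper writes $c(t)-1=\exp\bigl(\sum_\alpha\log(1+\lambda_\alpha t)\bigr)$ and differentiates, which is exactly your logarithmic-derivative identity $p'(t)/p(t)=\sum_\alpha\lambda_\alpha/(1+\lambda_\alpha t)=\sum_{n\ge1}(-1)^{n-1}s_nt^{n-1}$ with $s_n=n!\,\ch'_n$, followed by the same splitting into the geometric series $h\sum(\beta t^2/4)^n$ and the derived series contributing $-\tfrac{\gamma}{2}t^2(1-\tfrac{\beta}{4}t^2)^{-2}$. The low-order bookkeeping you flag is handled implicitly in the paper (the $n=0$ term of the shifted sum yields $\ch'_1=h$), and no extra sign twist from the dual is needed since the $\lambda_\alpha$ are defined as the Chern roots of the already-dualized object.
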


\begin{proof}
We have
\begin{eqnarray*}
c(t) - 1 &=& \exp \left( \sum_{\alpha} \log (1 + \lambda_{\alpha} t) \right)\\
&=& \exp \left( \sum_{\alpha} \lambda_{\alpha}t - \sum_{\alpha} \lambda_{\alpha}^2\frac{t^2}{2} + \sum_{\alpha} \lambda_{\alpha}^3\frac{t^3}{3} -+ \cdots \right)\\
&=& \exp \left( \ch'_1 t + \ch'_3 2! t^3 + \cdots + \ch'_{2n+1} (2n)! t^{2n+1} + \cdots \right)\\
&=& \exp \left( \sum_{n \geq 0} \left(\frac{\beta h}{4} - \frac{n}{2} \gamma \right) \left(\frac{\beta}{4} \right)^{n-1} \frac{t^{2n+1}}{2n+1} \right).
\end{eqnarray*}
Differentiating we get
\begin{eqnarray*}
\frac{d}{dt}(c(t) -1) &=& (c(t)-1) \left( \sum_{n \geq 0} \left( \frac{\beta h}{4} - \frac{n}{2} \gamma \right) \left( \frac{\beta}{4} \right)^{n-1} t^{2n} \right)\\
&=& (c(t)-1) \left(h \sum_{n \geq 0} \left( \frac{\beta}{4} \right)^nt^{2n} - \frac{\gamma}{2} t^2 \sum_{n\geq 1} n \left(\frac{\beta}{4} \right)^{n-1} t^{ 2n-2} \right)\\
&=& (c(t) -1) \left( \frac{h}{1-\frac{\beta}{4}t^2} - \frac{\gamma}{2} t^2 \frac{1}{\left(1 - \frac{\beta}{4}t^2 \right)^2 }\right).
\end{eqnarray*}
Multiplying by $\left( 1 - \frac{\beta}{4} t^2 \right)^2$ we get the assertion.
\end{proof}

Inserting $c(t) = 2 + \sum_{n \geq 1} c_nt^n$ and comparing the coefficients of powers of $t$, we get as an immediate consequence the following recurrence relation for the coefficients $c_n$.

\begin{cor} \label{cor3.2}
$$
c_1 = h, \qquad 2c_2 = h^2, \qquad 3c_3 = \frac{1}{2}h^3 + \frac{\beta}{4}h - \frac{\gamma}{2}, \quad 4c_4 = \frac{1}{6}h^4 + \frac{\beta}{3} h^2 - \frac{2\gamma}{3}h
$$
and for all $n \geq 1$,
 $$
 (n+4) c_{n+4} - \frac{\beta}{2}(n+2) c_{n+2} + \left(\frac{\beta}{4} \right)^2 n c_n = h c_{n+3} - \left( \frac{\beta h}{4} + \frac{\gamma}{2} \right) c_{n+1}.
 $$
\end{cor}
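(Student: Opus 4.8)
The plan is to read off the Corollary directly from Lemma \ref{lem3.1} by expanding both sides as formal power series in $t$ and comparing coefficients of $t^N$.

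First I would put the two sides of Lemma \ref{lem3.1} into expanded form
$$
\left(1-\frac{\beta}{2}t^2+\frac{\beta^2}{16}t^4\right)\frac{d}{dt}(c(t))=(c(t)-1)\left(h-\left(\frac{\beta h}{4}+\frac{\gamma}{2}\right)t^2\right),
$$
and substitute $c(t)=2+\sum_{n\ge1}c_nt^n$, so that $\frac{d}{dt}(c(t))=\sum_{n\ge1}nc_nt^{n-1}$ and $c(t)-1=1+\sum_{n\ge1}c_nt^n$. The one point that must be kept in mind throughout is that the constant term of $c(t)-1$ equals $1$, not $2$; this is precisely what produces the inhomogeneous pieces $h$ and $\frac{\beta h}{4}+\frac{\gamma}{2}$ appearing in the formulas for the low-order $c_n$.

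Next I would extract the coefficient of $t^{N}$ on each side. On the left this coefficient is
$$
(N+1)c_{N+1}-\frac{\beta}{2}(N-1)c_{N-1}+\frac{\beta^2}{16}(N-3)c_{N-3},
$$
with the convention that a term is dropped once its index falls below $1$; on the right it is $h\,\tilde c_N-\left(\frac{\beta h}{4}+\frac{\gamma}{2}\right)\tilde c_{N-2}$, where $\tilde c_0=1$ and $\tilde c_m=c_m$ for $m\ge1$. Taking $N=n+3$ with $n\ge1$, every index that occurs is $\ge1$, so each $\tilde c$ is an honest $c_m$ and no left-hand term is dropped; since $\frac{\beta^2}{16}=\left(\frac{\beta}{4}\right)^2$, equating the two expressions yields exactly the asserted recurrence. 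The explicit values of $c_1,c_2,c_3,c_4$ then come from the cases $N=0,1,2,3$ of the very same coefficient comparison, each followed by a short back-substitution of the previously computed values (e.g. inserting $c_1=h$ and $c_2=\frac{h^2}{2}$ into the relation obtained for $N=2$, and so on); here it is essential to use $\tilde c_0=1$ rather than $c_0=2$.

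I do not expect any real obstacle: the argument is pure power-series bookkeeping. The only thing requiring care — and the reason the recurrence is stated only for $n\ge1$ while $c_1,\dots,c_4$ are listed separately — is the boundary behaviour for small $N$, where the discrepancy $c_0=2$ versus $\tilde c_0=1$ and the vanishing of the low-index terms on the left-hand side have to be tracked by hand.
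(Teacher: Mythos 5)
Your proposal is correct and is exactly the paper's argument: the paper derives Corollary \ref{cor3.2} by inserting $c(t)=2+\sum_{n\ge1}c_nt^n$ into Lemma \ref{lem3.1} and comparing coefficients of powers of $t$, which is precisely your coefficient extraction at $t^N$ (with the same care about the constant term of $c(t)-1$ being $1$). The details you supply, including the boundary cases $N=0,1,2,3$, check out.
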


\begin{rem} \label{rem3.3}
{\rm Let us write $\widetilde c_i := c_i(1,\beta,0)$. Then the same proof as of Lemma \ref{lem3.1} gives the simpler recurrence relation for $n \geq 1$,}
\begin{equation} \label{remrec}
\widetilde c_0 = 2, \quad \widetilde c_1 = 1, \quad (n+1) \widetilde c_{n+1} = \widetilde c_n + \frac{\beta}{4}(n-1) \widetilde c_{n-1}. 
\end{equation}

\end{rem}

\begin{lem} \label{lemej}
Suppose $\beta = 4$. Then for $n \geq 1$,
$$
\widetilde c_{2n+1} = \widetilde c_{2n}= \frac{(2n)!}{2^{2n} (n!)^2}.
$$
Furthermore, for any odd prime $p > n$,
$$
\widetilde c_{2n+1} \equiv (-1)^n e_n \mod p
$$
where $e_n$ is defined by
$$
(1+t)^{\frac{p-1}{2}} = \sum_{i=0}^{\frac{p-1}{2}}e_i t^i.
$$
\end{lem}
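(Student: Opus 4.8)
The plan is to prove the closed form first and then extract the congruence from it, since the two statements are not really independent. For the closed form I would re-use the computation of $c(t)-1$ carried out in the proof of Lemma~\ref{lem3.1}: substituting $h=1$, $\gamma=0$ and $\beta=4$ collapses every factor $\left(\frac{\beta h}{4}-\frac n2\gamma\right)\left(\frac\beta4\right)^{n-1}$ to $1$, so that, as formal power series,
$$\sum_{i\ge0}\widetilde c_i t^i-1=\exp\Bigl(\sum_{n\ge0}\frac{t^{2n+1}}{2n+1}\Bigr)=\exp(\operatorname{arctanh}t)=\Bigl(\frac{1+t}{1-t}\Bigr)^{1/2}=(1+t)(1-t^2)^{-1/2}.$$
Since $(1-t^2)^{-1/2}=\sum_{n\ge0}\frac{(2n)!}{2^{2n}(n!)^2}t^{2n}$, comparing coefficients yields $\widetilde c_{2n}=\widetilde c_{2n+1}=\frac{(2n)!}{2^{2n}(n!)^2}$ for $n\ge1$ (and recovers $\widetilde c_0=2$, $\widetilde c_1=1$). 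A self-contained alternative avoids that generating function and instead verifies the formula by induction on $n$ directly from \eqref{remrec} with $\beta=4$, the inductive step amounting to the elementary identity $2n\cdot\frac{(2n)!}{2^{2n}(n!)^2}=(2n-1)\cdot\frac{(2n-2)!}{2^{2n-2}((n-1)!)^2}$, with base cases $\widetilde c_2=\widetilde c_3=\frac12$.

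For the congruence, write $a_n:=\widetilde c_{2n+1}=\frac{(2n)!}{2^{2n}(n!)^2}$ and note the algebraic identity
$$a_n=(-1)^n\binom{-1/2}{n},\qquad\text{since}\qquad\binom{-1/2}{n}=\frac{1}{n!}\prod_{j=0}^{n-1}\Bigl(-\frac12-j\Bigr)=\frac{(-1)^n(2n)!}{2^{2n}(n!)^2}.$$
By the binomial theorem $e_n=\binom{(p-1)/2}{n}$. Fix an odd prime $p>n$; then $n!$ is a unit modulo $p$, so the polynomial $\binom{x}{n}=\frac{1}{n!}x(x-1)\cdots(x-n+1)$ has $p$-integral coefficients and therefore takes congruent values at arguments that are congruent modulo $p$. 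Since $2\cdot\frac{p-1}{2}=p-1\equiv-1\pmod p$, we have $\frac{p-1}{2}\equiv-\frac12\pmod p$, whence
$$e_n=\binom{(p-1)/2}{n}\equiv\binom{-1/2}{n}=(-1)^na_n\pmod p,$$
which is exactly $\widetilde c_{2n+1}\equiv(-1)^ne_n\pmod p$.

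Every step is a short computation, so I do not expect a genuine obstacle. The points that need some care are that the generating-function identity be read purely formally (with $\operatorname{arctanh}t=\frac12\log\frac{1+t}{1-t}$), and that every denominator appearing in the reduction modulo $p$—the factor $n!$ and the powers of $2$—be invertible modulo $p$; this is precisely what the hypotheses "$p$ odd" and "$p>n$" guarantee.
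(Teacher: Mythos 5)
Your proof is correct. The congruence part is exactly the paper's argument, made explicit: the authors likewise reduce to $e_n=\binom{(p-1)/2}{n}\equiv(-1)^n\frac{(2n)!}{2^{2n}(n!)^2}\pmod p$ and simply assert this "by calculating modulo $p$"; your justification via the $p$-integrality of the coefficients of $\binom{x}{n}$ for $p>n$ together with $\frac{p-1}{2}\equiv-\frac12\pmod p$ is a clean way to carry out that calculation. For the closed form the paper only checks low values and inducts on \eqref{remrec}, which is precisely your fallback route (and your inductive identity $2n\,a_n=(2n-1)a_{n-1}$ is the right one). Your primary derivation, specializing the exponential formula from the proof of Lemma \ref{lem3.1} to $h=1$, $\gamma=0$, $\beta=4$ to get $c(t)-1=\exp(\operatorname{arctanh}t)=(1+t)(1-t^2)^{-1/2}$, is a genuinely different and more illuminating computation: it explains in one stroke why central binomial coefficients appear and why consecutive even and odd coefficients coincide. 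Either route suffices; the generating function buys conceptual clarity, the induction is the more elementary self-contained verification.
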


\begin{proof}
The first assertion follows by calculating the low values and proving the general formula by induction using \eqref{remrec}. For the last assertion note that $e_{n} = {\frac{p-1}{2} \choose n}.$
Calculating modulo $p$ we see that 
$$
{\frac{p-1}{2} \choose n} \equiv (-1)^n \frac{(2n)!}{2^{2n} (n!)^2} \mod p.
$$ 
(Note that, since $p>n$, the denominators of both sides of this congruence are coprime to $p$, so the congruence makes sense.) This gives the result.
\end{proof}

\begin{lem} \label{lem3.5}
For any odd prime $p>k$,
$$
P_k(1,4,0) \not \equiv 0 \mod p.
$$
\end{lem}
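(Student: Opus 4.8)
The plan is to work modulo an odd prime $p > k$ throughout, and to extract a specific coefficient of $P_k(1,4,0)$ that is visibly nonzero mod $p$. By \eqref{e2.4}, $P_k(1,\beta,0) = \Delta_{k,k-1,\dots,1}(\widetilde c_i)$, the $k\times k$ determinant whose entry in position $(a,b)$ is $\widetilde c_{k-2a+2b}$ (with the convention $\widetilde c_i = 0$ for $i<0$ and $\widetilde c_0 = 2$). First I would specialise $\beta = 4$ and invoke Lemma \ref{lemej}: modulo $p$ we have $\widetilde c_{2n+1} \equiv (-1)^n e_n$ and $\widetilde c_{2n} = \widetilde c_{2n+1}$ for $n\ge 1$, where $e_n = \binom{(p-1)/2}{n}$ are the coefficients of $(1+t)^{(p-1)/2}$. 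So modulo $p$ the determinant $P_k(1,4,0)$ becomes a determinant in the quantities $e_n$ (and the anomalous entry $\widetilde c_0 = 2$), i.e. essentially a minor of the (infinite) matrix of binomial coefficients $\binom{(p-1)/2}{j}$.

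The key step is to recognise this minor as (a specialisation of) a Jacobi–Trudi / Giambelli determinant. Recall that for a single variable, or more precisely in the ring of symmetric functions, the determinant $\det\big(e_{\lambda'_i - i + j}\big)$ equals the Schur function $s_\lambda$ (dual Jacobi–Trudi). The index pattern $k, k-2, \dots$ down the rows with a shift of $2$ per step is exactly the Giambelli-type pattern for a staircase-like partition; concretely, the shape here corresponds to the partition with parts $k, k-1, \dots, 1$ arranged so that the determinant is $\Delta_{k,k-1,\dots,1}$, the fundamental class of a Brill–Noether-type locus, which by Thom–Porteous equals a Schur polynomial in the Chern roots. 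Thus I would identify $P_k(1,4,0) \bmod p$ with $s_\lambda$ evaluated at the Chern roots of a vector bundle whose total Chern class is $(1+t)^{(p-1)/2}$ — i.e. $s_\lambda$ evaluated at $p-1)/2$ variables all equal to... no: more precisely, at the alphabet whose $j$th elementary symmetric function is $\binom{(p-1)/2}{j}$, which is the alphabet of $(p-1)/2$ ones. For that alphabet, $s_\lambda(1^N) = \prod_{(i,j)\in\lambda} \frac{N + j - i}{h(i,j)}$ by the Weyl dimension / hook-content formula, with $N = (p-1)/2$.

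The last step is then a numerical check: the partition $\lambda$ in question has size $\binom{k+1}{2} = k(k+1)/2$ and largest hook content bounded by something on the order of $k$, so every factor $N + j - i = (p-1)/2 + (j-i)$ with $|j-i| < k < p$ is a unit mod $p$, and every hook length is $< p$ hence also a unit; therefore the hook-content product is a nonzero element of $\mathbb{F}_p$, giving $P_k(1,4,0) \not\equiv 0 \bmod p$. I should be slightly careful because one matrix entry is $\widetilde c_0 = 2$ rather than $e_0 = 1$, so the determinant is not literally $s_\lambda$ but $s_\lambda$ plus a correction; I would handle this by expanding along the column or row containing that anomalous entry, writing $P_k(1,4,0) \equiv s_\lambda + (\text{terms involving } s_\mu \text{ for smaller } \mu)$, and checking that the leading Schur term dominates, or more cleanly by observing that replacing $2$ by $1$ changes the determinant by a lower-order minor that cannot cancel the hook-content value for degree reasons. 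The main obstacle I anticipate is precisely this bookkeeping: matching the index convention of $\Delta_{k,k-1,\dots,1}$ to the standard (dual) Jacobi–Trudi shape, and correctly accounting for the $\widetilde c_0 = 2$ anomaly so that the identification with a single nonzero Schur polynomial value is clean; once that is pinned down, the hook-content nonvanishing mod $p$ is immediate from $p > k$.
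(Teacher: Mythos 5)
Your overall strategy --- reduce mod $p$, turn the entries into binomial coefficients $e_n=\binom{(p-1)/2}{n}$ via Lemma \ref{lemej}, recognise a Schur polynomial evaluated at the alphabet $1^{(p-1)/2}$, and finish with the hook-content formula (\cite[Exercise A.30]{fh}) --- is exactly the paper's endgame, and that last step is sound. But the ``bookkeeping'' you defer is where the real content lies, and as set up your identification would fail. First, your indexing is off: the entry in position $(a,b)$ is $\widetilde c_{\,k-2a+b+1}$ (indices step by $1$ across a row and by $2$ down a column), not $\widetilde c_{k-2a+2b}$. Second, there is not one anomalous entry $\widetilde c_0=2$ but roughly $k/2$ of them, one in each of the bottom $\lceil k/2\rceil$ rows, so ``expanding along the row containing that entry'' is not a one-step correction. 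Third, and most seriously, the $k\times k$ matrix is \emph{not} (even up to those corrections) a dual Jacobi--Trudi minor of the binomial matrix in the all-ones alphabet: since $\widetilde c_{2n}=\widetilde c_{2n+1}$, the entry with index $n$ reduces to $\pm\binom{(p-1)/2}{\lfloor n/2\rfloor}$, not $\binom{(p-1)/2}{n}$, so adjacent columns come in nearly identical pairs and the naive reading as $s_\lambda$ for the staircase $\lambda=(k,k-1,\dots,1)$ of size $k(k+1)/2$ is simply wrong (indeed $\deg_\beta P_k(1,\beta,0)\le\lfloor k^2/4\rfloor$ by Lemma \ref{lem3.6}, incompatible with that shape).

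The missing idea is the paper's column-elimination: because the last two columns agree except in their final entries (where $\widetilde c_0-\widetilde c_1=1$), one subtracts and expands, then repeats; this simultaneously disposes of all the $2$'s and halves the matrix, leaving an $(m{+}1)\times(m{+}1)$ (resp.\ $m\times m$) determinant in the odd-indexed $\widetilde c_{2n+1}\equiv(-1)^ne_n$ alone, whose index pattern now steps by $2$ and which \emph{is} a Jacobi--Trudi minor --- for the rectangular partition $((m+1)^m)$ (resp.\ $(m^m)$) of size $\lfloor k^2/4\rfloor$, not the staircase. Only after this reduction does the hook-content evaluation at $1^{(p-1)/2}$ apply; with the rectangle one checks all hooks and all shifted contents $\frac{p-1}{2}+j-i$ are units mod $p$ because $p>k$, exactly as you intended. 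So your proposal has the right beginning and end but omits the structural reduction that makes the middle step true; supplying it essentially reproduces the paper's proof.
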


\begin{proof}
Recall that 
$$
P_k(1,\beta,0) = \left| \begin{array}{ccccc}
                        \widetilde c_k & \widetilde c_{k+1} & \cdots & \cdots & \widetilde c_{2k-1} \\
                       \widetilde c_{k-2} & \widetilde c_{k-1} & \cdots & \cdots& \widetilde c_{2k-3} \\
                        \cdots & \cdots & \cdots & \cdots& \cdots \\
0&\cdots & 0 & 2 & 1
 \end{array}   \right|.
$$
Now suppose $\beta = 4$. Then, by Lemma \ref{lemej}, the last 2 columns are identical except for their
final entries. So we can subtract the last column from the penultimate one and expand by this column 
to get
$$
P_k(1,4,0) = - \left| \begin{array}{cccccc}
 \widetilde c_k & \widetilde c_{k+1} & \cdots & \cdots & \widetilde c_{2k-3} & \widetilde c_{2k-1}\\
\widetilde c_{k-2} & \widetilde c_{k-1} & \cdots & \cdots& \widetilde c_{2k-5} &\widetilde c_{2k-3} \\
            \cdots &  \cdots & \cdots & \cdots & \cdots& \cdots \\
0&\cdots & 0 & 2 & 1& \widetilde c_3
 \end{array}   \right|.
$$
In this matrix the penultimate and antepenultimate columns differ only by their final entries. So we 
can continue in this way and get for $k = 2m+1$,
$$
P_k(1,4,0) = (-1)^{\lfloor \frac{m+1}{2} \rfloor}
                      \left| \begin{array}{cccc}
     \widetilde c_{2m+1} & \widetilde c_{2m+3} & \cdots & \widetilde c_{4m+1} \\
                       \widetilde c_{2m-1} & \widetilde c_{2m+1} & \cdots & \widetilde c_{4m-1} \\
                        \cdots & \cdots & \cdots & \cdots \\
\widetilde c_1&\widetilde c_3 & \cdots & \widetilde c_{2m+1}
 \end{array}   \right|.
$$
and for $k = 2m$
$$
P_k(1,4,0) = (-1)^{\lfloor \frac{m+1}{2} \rfloor}
                      \left| \begin{array}{cccc}
     \widetilde c_{2m+1} & \widetilde c_{2m+3} & \cdots & \widetilde c_{4m-1} \\
                       \widetilde c_{2m-1} & \widetilde c_{2m+1} & \cdots & \widetilde c_{4m-3} \\
                        \cdots & \cdots & \cdots & \cdots \\
\widetilde c_3&\widetilde c_5 & \cdots & \widetilde c_{2m+1}
 \end{array}   \right|.
$$

Now suppose that $p$ is an odd prime, $p > k$. Suppose first $k = 2m+1$.
Then we have, by Lemma \ref{lemej},
\begin{eqnarray*}
P_k(1,4,0) & \equiv &  (-1)^{\lfloor \frac{m+1}{2} \rfloor}
     \left| \begin{array}{cccc}
     e_m & e_{m+1} & \cdots & e_{2m} \\
                       e_{m-1} & e_{m} & \cdots & e_{2m-1} \\
                        \cdots & \cdots & \cdots & \cdots \\
        e_0& e_1& \cdots & e_m
 \end{array}   \right| \\
& \equiv & (-1)^{\lfloor \frac{m+1}{2} \rfloor} \Delta_{m,\dots,m}(e_i) \mod p
\end{eqnarray*}
where $m$ is repeated $m+1$ times. Now
$$
\Delta_{m,\dots,m}(e_i) = S_{m+1,\dots,m+1,0,\dots,0}(1,\dots,1)
$$
where $m+1$ is repeated $m$ times, $0$ is repeated $\frac{p-1}{2} - m$ times and
$S$ is the Schur polynomial (see \cite[equation (A.6)]{fh}). Using \cite[Exercise A.30]{fh}, we see that
$$
S_{m+1,\dots,m+1,0,\dots,0}(1,\dots,1) \not\equiv 0 \mod p.
$$
This completes the proof for $k$ odd. For $k = 2m$ we have similarly
$$
P_k(1,4,0) \equiv (-1)^{\lfloor \frac{m}{2} \rfloor} S_{m,\dots,m,0,\dots,0}(1,\dots,1) \mod p
$$
where $m$ is repeated $m$ times and 0 is repeated $\frac{p-1}{2} -m$ times. Again $S_{m,\dots,m,0,\dots,0}(1,\dots,1) \not\equiv 0\mod p$ which completes the proof.
\end{proof}

\begin{rem} \label{r4.6}
{\rm Using \cite[Exercise A.30]{fh} one can show that 
$$
P_k(1,4,0) \equiv (-1)^{\delta(k)} \frac{1}{2^{\frac{k(k-1)}{2}}} \mod p
$$
for all odd primes $p > k$ and hence 
$$
P_k(1,4,0) = (-1)^{\delta(k)} \frac{1}{2^{\frac{k(k-1)}{2}}}.
$$
Here $\delta(k) = 0$ if $k = 2m+1$ or $k=2m$ and $m$ is even and $\delta(k) = 1$ if $k = 2m$ and $m$ is odd.
}
\end{rem}

\begin{lem} \label{lem3.6}
The polynomial $P_k(1,\beta,0)$ has degree $\leq \lfloor \frac{k^2}{4} \rfloor$. 
\end{lem}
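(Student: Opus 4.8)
The plan is to read off $\deg_\beta P_k(1,\beta,0)$ from the Leibniz expansion of the determinant. By \eqref{e2.4} and the identification made in the proof of Lemma \ref{lem3.5}, $P_k(1,\beta,0)=\Delta_{k,k-1,\dots,1}(\widetilde c_i)$ is the $k\times k$ determinant with $(i,j)$-entry $\widetilde c_{k-2i+j+1}$, so $\deg_\beta P_k(1,\beta,0)$ is at most the maximum over permutations $\sigma$ of $\{1,\dots,k\}$ of $\sum_{i=1}^k\deg_\beta\widetilde c_{k-2i+\sigma(i)+1}$.

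First I would record the $\beta$-degrees of the $\widetilde c_n$. One has $\widetilde c_0=2$, $\widetilde c_1=1$, $\widetilde c_2=\tfrac12$, and for every $n\ge1$, $\deg_\beta\widetilde c_n\le\lfloor(n-1)/2\rfloor$; the latter is an immediate induction from the recurrence \eqref{remrec}, since $\deg_\beta\bigl(\tfrac\beta4(n-1)\widetilde c_{n-1}\bigr)\le 1+\lfloor(n-2)/2\rfloor=\lfloor n/2\rfloor$ and $\lfloor(n-1)/2\rfloor\le\lfloor n/2\rfloor$. Writing $d(n):=\deg_\beta\widetilde c_n$ (with $d(n)=-\infty$ for $n<0$, where $\widetilde c_n=0$), this says exactly that $d(n)\le\tfrac n2-g(n)$ for all $n\ge0$, where $g(0)=0$, $g(n)=\tfrac12$ for $n$ odd, and $g(n)=1$ for $n$ even and $\ge2$; note $g\ge0$ and $g$ vanishes \emph{only} at $0$.

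The key point is a conservation law: for any permutation $\sigma$, the integers $\tau(i):=k-2i+\sigma(i)+1$ satisfy $\sum_{i=1}^k\tau(i)=k(k+1)/2$, because $\sum_i\sigma(i)=k(k+1)/2$. Permutations with some $\tau(i)<0$ contribute a zero monomial, so we may restrict to $\sigma$ with $\tau(i)\ge0$ for all $i$, and then
$$\sum_{i=1}^k d(\tau(i))\ \le\ \sum_{i=1}^k\Bigl(\tfrac{\tau(i)}2-g(\tau(i))\Bigr)\ =\ \frac{k(k+1)}4-\sum_{i=1}^k g(\tau(i)).$$
It remains to bound $\sum_i g(\tau(i))$ from below. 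Since $g\ge\tfrac12$ off $0$, this sum is at least $\tfrac12\,\#\{i:\tau(i)\ge1\}$. Now $\tau(i)=0$ forces $\sigma(i)=2i-k-1$, which lies in $\{1,\dots,k\}$ only when $i\ge\lceil(k+2)/2\rceil$; hence at most $\lfloor k/2\rfloor$ of the indices $i$ can have $\tau(i)=0$, so $\#\{i:\tau(i)\ge1\}\ge\lceil k/2\rceil$ and $\sum_i g(\tau(i))\ge\tfrac12\lceil k/2\rceil$. Therefore $\deg_\beta P_k(1,\beta,0)\le\frac{k(k+1)}4-\tfrac12\lceil k/2\rceil$, and a one-line check in the cases $k$ even and $k$ odd identifies the right-hand side with $\lfloor k^2/4\rfloor$.

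The induction for $d(n)$ and the closing arithmetic are routine. The one delicate point — and the reason the crude estimate $d(n)\le n/2$, which only gives the too-weak bound $k(k+1)/4$, does not suffice — is that $\widetilde c_0=2$ has $\beta$-degree $0$ rather than the "expected" value $-1$, so a row whose selected entry is $\widetilde c_0$ contributes nothing to the saving $\sum_i g(\tau(i))$. The heart of the argument is thus the positional constraint $\sigma(i)=2i-k-1$ forced on such rows, which caps their number at $\lfloor k/2\rfloor$ — precisely what is needed for the final bound to come out as $\lfloor k^2/4\rfloor$.
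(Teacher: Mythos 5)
Your proof is correct. It rests on the same key input as the paper's --- the bound $\deg_\beta\widetilde c_n\le\lfloor\frac{n-1}{2}\rfloor$ coming from the recurrence \eqref{remrec} --- but organizes the determinant estimate quite differently. The paper mimics the column-reduction of Lemma \ref{lem3.5}: it expands along the last row, argues that the dominant contribution comes from deleting the penultimate column, iterates, and reads off the degree from the resulting $m\times m$ (resp.\ $(m{+}1)\times(m{+}1)$) pattern of odd-indexed $\widetilde c$'s. Your argument instead works directly with the Leibniz expansion of the original $k\times k$ determinant, exploiting the conservation law $\sum_i\tau(i)=\frac{k(k+1)}{2}$ together with the observation that the only entries giving no ``saving'' are the $\widetilde c_0$'s, whose positions are constrained to at most $\lfloor k/2\rfloor$ rows. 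This buys a fully self-contained and arguably more rigorous proof: the paper's step ``the highest powers of $\beta$ arise if we delete the penultimate column'' is left to the reader, whereas your permutation-by-permutation bound needs no such claim and cleanly quantifies exactly where the drop from the naive bound $\frac{k(k+1)}{4}$ to $\lfloor\frac{k^2}{4}\rfloor$ comes from. The only microscopic wrinkle is the base case $n=1$ of your induction, where $\widetilde c_0=2$ does not satisfy $\deg_\beta\widetilde c_0\le\lfloor-\frac12\rfloor$; this is harmless because the factor $(n-1)$ in \eqref{remrec} kills that term, but it is worth a half-sentence.
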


\begin{proof}
The argument is similar to that of Lemma \ref{lem3.5}. From Remark \ref{rem3.3} we see that the maximum power of $\beta$ in $\widetilde c_n$ is at most 
$\lfloor \frac{n-1}{2} \rfloor$. Expanding $P_k(1,\beta,0)$ by its last row we see that the highest powers of $\beta$ arise if we delete the penultimate column and the last row. 
Now continue just as in the proof of Lemma \ref{lem3.5}. We end up with the following patterns of powers of $\beta$:
$$
\left( \begin{array}{cccc}
        m&m+1&\cdots&2m-1\\
        m-1&m&\cdots&2m-2\\
        \cdots&\cdots&\cdots&\cdots\\
        1&2& \cdots&m
       \end{array}  \right)
$$
if $k = 2m$. So the maximum power of $\beta$ in this case is 
$$
1 + 3 + \cdots + (2m-1) = m^2 = \left\lfloor \frac{k^2}{4} \right\rfloor.
$$
If $k = 2m+1$, the pattern is
$$
\left( \begin{array}{cccc}
        m&m+1&\cdots&2m\\
        m-1&m&\cdots&2m-1\\
        \cdots&\cdots&\cdots&\cdots\\
        0&1& \cdots&m
       \end{array}  \right).
$$
In this case the maximum power of $\beta$ is
$$
0 + 2 + \cdots + (2m) = m(m+1) = \frac{k-1}{2} \cdot \frac{k+1}{2} = \left\lfloor \frac{k^2}{4} \right\rfloor.
$$
\end{proof}

\begin{lem} \label{lem3.7}
For $i$ an integer, $1 \leq i < \lfloor \frac{k+1}{2} \rfloor$, the number $\frac{1}{i^2}$ is a zero of the polynomial $P_k(1,\beta,0)$ of multiplicity at least
$\lfloor \frac{k+1}{2} \rfloor -i$.
\end{lem}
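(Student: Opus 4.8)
The plan is to localise at $\beta=1/i^2$ and show that the $k\times k$ matrix $M$ whose determinant is $P_k(1,\beta,0)$ has corank at least $\lfloor(k+1)/2\rfloor-i$ there, and then to invoke the elementary fact that the determinant of a matrix with entries polynomial in $\beta$ vanishes at a point to order at least the corank of the matrix at that point.

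First I would produce a closed form for $c(t):=\sum_{n\ge0}\widetilde c_nt^n$ at $\beta=1/i^2$. Taking $h=1$, $\gamma=0$ in Lemma \ref{lem3.1} gives the first order ODE $\bigl(1-\tfrac\beta4 t^2\bigr)\tfrac{d}{dt}\bigl(c(t)-1\bigr)=c(t)-1$ with $c(0)-1=1$; after substituting $\beta=1/i^2$, comparison of initial values and logarithmic derivatives shows that $\tfrac{(2i+t)^i}{(2i-t)^i}$ is its solution, so
$$
(2i-t)^i\,c(t)=(2i+t)^i+(2i-t)^i
$$
is a polynomial in $t$ of degree $\le i$. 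Writing $(2i-t)^i=\sum_{s=0}^iQ_st^s$, so that $Q_0=(2i)^i\ne0$ and $Q_i=(-1)^i$, and comparing coefficients of $t^n$ for $n\ge i+1$, we get $\sum_{s=0}^iQ_s\widetilde c_{n-s}=0$: at $\beta=1/i^2$ the sequence $(\widetilde c_n)$ satisfies a linear recursion of order $i$.

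Now work at $\beta=1/i^2$ with $M=\bigl(\widetilde c_{k+1-2a+b}\bigr)_{1\le a,b\le k}$ (with $\widetilde c_n=0$ for $n<0$). For $1\le j\le k-i$ I would form the column combination $w_j:=\sum_{s=0}^iQ_{i-s}\,(\text{column }j+s)$ of $M$. Its entry in row $a$ is $\sum_{u=0}^iQ_u\widetilde c_{n_a-u}$ with $n_a:=k+1-2a+j+i$, i.e. the coefficient of $t^{n_a}$ in the polynomial $(2i-t)^ic(t)$, hence it vanishes as soon as $n_a>i$, that is as soon as $a\le\lfloor(k+j)/2\rfloor$. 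Thus $w_j$ is supported on the last $k-\lfloor(k+j)/2\rfloor$ rows; in particular (the case $j=1$) each $w_j$ lies in the span of the last $\lfloor k/2\rfloor$ coordinate vectors. Since $Q_i=(-1)^i\ne0$, a descending induction on $j$ writes each of columns $1,\dots,k-i$ of $M$ as a combination of $w_1,\dots,w_{k-i}$ and the last $i$ columns of $M$, so the column space of $M$ lies in the span of those $w_j$ together with $i$ further vectors; hence $\rk(M)\le\lfloor k/2\rfloor+i$, and the corank of $M$ is at least $k-\lfloor k/2\rfloor-i=\lfloor(k+1)/2\rfloor-i$, which is positive by hypothesis. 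Multiplying $M(\beta)$ by the constant invertible matrices that diagonalise $M|_{\beta=1/i^2}$ makes at least $\lfloor(k+1)/2\rfloor-i$ of its columns divisible by $\beta-1/i^2$, so by multilinearity $(\beta-1/i^2)^{\lfloor(k+1)/2\rfloor-i}$ divides the determinant of the product and hence (up to a nonzero constant) $\det M=P_k(1,\beta,0)$, which is the assertion.

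The computations are light. The main point, and the only idea really needed, is the observation in the second paragraph: $\beta=1/i^2$ is exactly the value at which $\widetilde c$ becomes the coefficient sequence of a rational function with denominator of degree $i$, and the resulting short linear recursion forces the rank of the Jacobi--Trudi-type matrix $M$ down by the stated amount. The only thing requiring care is the index bookkeeping — the floors and the convention $\widetilde c_n=0$ for $n<0$ — in locating the supports of the $w_j$.
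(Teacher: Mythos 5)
Your proof is correct and is essentially the paper's argument: both rest on the observation that at $\beta=\frac1{i^2}$ the sequence $(\widetilde c_n)$ degenerates to order $i$ (you phrase this as the closed form $c(t)-1=\frac{(2i+t)^i}{(2i-t)^i}$ coming from Lemma \ref{lem3.1}, giving an order-$i$ linear recursion; the paper phrases it as $\widetilde c_n=(2i)^{-n}(a_0+\cdots+a_{i-1}n^{i-1})$ coming from the recurrence \eqref{remrec}), and both then deduce that the Jacobi--Trudi matrix has corank at least $\lfloor\frac{k+1}2\rfloor-i$ and that the determinant vanishes to at least that order. The only cosmetic difference is that you run the rank argument on columns where the paper notes that the first $\lfloor\frac{k+1}2\rfloor$ rows lie in an $i$-dimensional space; the bounds obtained are identical.
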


\begin{proof}
If we substitute $\beta = \frac{1}{i^2}$, it follows from \eqref{remrec} that for 
$n \geq 1, \; \widetilde c_n$ has the form 
$$
\frac{1}{(2i)^n} \left[ a_0 + a_1n  + \cdots + a_{i-1}n^{i-1} \right].
$$
To see this, substituting in \eqref{remrec} gives $i-1$ homogeneous equations in 
$a_0,\dots,a_{i-1}$ which have a non-trivial solution. This is determined
up to a scalar multiple which is itself determined  by $\widetilde c_1 = 1$.

This implies that the first $\lfloor \frac{k+1}{2} \rfloor$ rows of the matrix defining $P_k(1,\frac{1}{i^2},0)$ lie in a $\QQ$-vector space of dimension $i$.
So the rank of the matrix drops by $\lfloor \frac{k+1}{2} \rfloor -i$. Thus $P_k(1,\beta,0)$ has $\frac{1}{i^2}$ as a root of multiplicity at least 
$\lfloor \frac{k+1}{2} \rfloor -i$.
\end{proof}

This lemma is not best possible. In fact, we make the following conjecture.

\begin{conj}\label{conj}
For $i$ an integer, $1 \leq i \le k-1$, the number $\frac{1}{i^2}$ is a zero of the polynomial $P_k(1,\beta,0)$ of multiplicity at least
$\lfloor \frac{k-i+1}{2} \rfloor $.
\end{conj}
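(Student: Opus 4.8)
The plan is to establish the sharper statement
$$
P_k(1,\beta,0)=c_k\prod_{i=1}^{k-1}\Bigl(\beta-\tfrac1{i^2}\Bigr)^{\lfloor(k-i+1)/2\rfloor}
$$
for a nonzero constant $c_k$: by Lemma \ref{lem3.6} the exponents sum to $\sum_{i=1}^{k-1}\lfloor(k-i+1)/2\rfloor=\lfloor k^2/4\rfloor$, the maximal possible degree of $P_k(1,\beta,0)$, and by Remark \ref{r4.6} the value at $\beta=4$ is nonzero, so it suffices to prove the asserted lower bounds on the multiplicities, which are then automatically sharp. The key preliminary is a closed form for $\widetilde c(t)=\sum_n\widetilde c_nt^n$. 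The computation of Lemma \ref{lem3.1} with $h=1$, $\gamma=0$ shows that $u:=\widetilde c-1$ satisfies $(1-\tfrac\beta4 t^2)u'=u$, $u(0)=1$, whence $u(t)=\bigl(\tfrac{2+\sqrt{\beta}\,t}{2-\sqrt{\beta}\,t}\bigr)^{1/\sqrt{\beta}}$ (a formal power series, all odd powers of $\sqrt{\beta}$ cancelling, with coefficients in $\QQ[\beta]$). At $\beta=1/i^2$ this becomes the rational function $u(t)=\bigl(\tfrac{2i+t}{2i-t}\bigr)^i$; separating even and odd parts in $t$ and writing $s=t^2$, one finds with $T_i,U_{i-1}$ the Chebyshev polynomials that
$$
\widetilde c_{2l}=\Phi_l,\qquad \widetilde c_{2l+1}=\Psi_l\quad(l\ge0),\qquad \Phi(s)=1+T_i\Bigl(\tfrac{4i^2+s}{4i^2-s}\Bigr),\quad \Psi(s)=\tfrac{4i}{4i^2-s}\,U_{i-1}\Bigl(\tfrac{4i^2+s}{4i^2-s}\Bigr),
$$
where $\Phi_l,\Psi_l$ are the Taylor coefficients.

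Next I would recast the determinant. Reversing the orders of rows and of columns gives $P_k(1,\beta,0)=\det(\widetilde c_{2p-q})_{1\le p,q\le k}$, and sorting the columns by the parity of $q$ turns this, for $k=2m$, into $\pm\det[\,X'\mid Y'\,]$, where $X'_{p,j}=\Phi_{p-j}$ and $Y'_{p,j}=\Psi_{p-j}$ are the $2m\times m$ convolution matrices of $\Phi$ and $\Psi$ (for $k=2m+1$ the column blocks have sizes $m$ and $m+1$ and everything below goes through verbatim). This is a resultant-type determinant: $\det[X'\mid Y']=0$ precisely when some nonzero pair $(a,b)$ of polynomials of degree $<m$ satisfies $a(s)\Phi(s)+b(s)\Psi(s)\equiv0\pmod{s^{2m}}$.

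The heart of the argument is the shape of $\Phi,\Psi$ at $\beta=1/i^2$. Write $\Phi=\phi/d$, $\Psi=\psi/d$ with $d(s)=(4i^2-s)^i$ (so $d(0)\ne0$) and $\phi,\psi\in\QQ[s]$. Here I would invoke the classical fact that the zeros of $T_i(x)+1$ lying in $(-1,1)$ are double zeros of $T_i(x)+1$, hence critical points of $T_i$, hence zeros of $U_{i-1}=\tfrac1iT_i'$; pulling this back through the Möbius substitution $x=\tfrac{4i^2+s}{4i^2-s}$ yields $\phi=(\text{const})\cdot q(s)^2$ with $q\mid\psi$ and $\deg q=\lfloor i/2\rfloor$, so $\gcd(\phi,\psi)=q$ up to a constant, and $\bar\phi:=\phi/q$, $\bar\psi:=\psi/q$ are coprime with $\deg\bar\phi=\lfloor i/2\rfloor$, $\deg\bar\psi=\lfloor(i-1)/2\rfloor$. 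If now $a\phi+b\psi\equiv0\pmod{s^{2m}}$ with $\deg a,\deg b<m$, then dividing by $q$ (a unit in $\QQ[[s]]$) gives $a\bar\phi+b\bar\psi\equiv0\pmod{s^{2m}}$; since $i\le k-1$ the left-hand side has degree $<2m$, so it vanishes identically, which forces $(a,b)=(\bar\psi w,-\bar\phi w)$ with $\deg w\le m-1-\lfloor i/2\rfloor$. Hence $\ker[X'\mid Y']$ has dimension at least $m-\lfloor i/2\rfloor$, so the rank of $[X'\mid Y']$, equivalently of $(\widetilde c_{2p-q})$, at $\beta=1/i^2$ is at most $m+\lfloor i/2\rfloor$. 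Since a $k\times k$ matrix depending polynomially on $\beta$ whose rank at $\beta_0$ is $\le\rho$ has determinant vanishing at $\beta_0$ to order $\ge k-\rho$, it follows that $\beta=1/i^2$ is a zero of $P_k(1,\beta,0)$ of multiplicity at least $k-(m+\lfloor i/2\rfloor)=\lfloor(k-i+1)/2\rfloor$; the case $k=2m+1$ is identical after the corresponding count.

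I expect the only genuine obstacle to lie in the Chebyshev computation of the previous paragraph — checking carefully, with the parities of $i$ separated, that $\phi$ is a constant times a perfect square $q^2$, that $q$ divides $\psi$, that $\deg q$ is exactly $\lfloor i/2\rfloor$, and that no cancellation occurs against the pole factor $(4i^2-s)^i$. The remaining ingredients (the closed form for $\widetilde c$, the reduction to a Sylvester-type determinant, and the order-of-vanishing estimate from a rank drop) are routine, and, as observed at the outset, the degree count of Lemma \ref{lem3.6} then upgrades the inequalities to equalities, yielding the full factorisation of $P_k(1,\beta,0)$.
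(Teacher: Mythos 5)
The first thing to say is that the paper contains \emph{no} proof of this statement: it is explicitly labelled a conjecture, established only for $i=1$ and otherwise checked by Maple for $k\le20$, so there is no argument of the authors to compare yours against. Your proposal is thus a candidate proof of an open assertion, and having checked its steps I believe the strategy is sound. The closed form $u(t)=\bigl(\tfrac{2+\sqrt{\beta}\,t}{2-\sqrt{\beta}\,t}\bigr)^{1/\sqrt{\beta}}$ does follow from Lemma \ref{lem3.1} with $h=1$, $\gamma=0$ (equivalently from \eqref{remrec}), and at $\beta=1/i^2$ the unique power-series solution of $(1-\tfrac{t^2}{4i^2})u'=u$, $u(0)=1$, is the rational function $\bigl(\tfrac{2i+t}{2i-t}\bigr)^i$; the formulas for $\Phi,\Psi$ then come from $\tfrac12(x^i+x^{-i})=T_i\bigl(\tfrac{x+x^{-1}}{2}\bigr)$ and $\tfrac12(x^i-x^{-i})=\tfrac12(x-x^{-1})U_{i-1}\bigl(\tfrac{x+x^{-1}}{2}\bigr)$ with $\tfrac{x+x^{-1}}{2}=\tfrac{4i^2+t^2}{4i^2-t^2}$. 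The reversal giving $\det(\widetilde c_{2p-q})$ and the identification of the kernel of the two-block convolution matrix with pairs $(a,b)$ satisfying $a\Phi+b\Psi\equiv0$ modulo the appropriate power of $s$ are both correct. The Chebyshev step you flag as the only obstacle also checks out: $T_i(x)+1=2\cos^2(i\theta/2)$ has exactly $\lfloor i/2\rfloor$ roots in $(-1,1)$, all double and all critical points of $T_i$, hence roots of $U_{i-1}$, while the extra simple root at $x=-1$ for $i$ odd is not attained by the M\"obius substitution and merely absorbs one factor of $(4i^2-s)$; so $\phi=c\,q^2$ with $q\mid\psi$, $\deg q=\lfloor i/2\rfloor$ and $q(0)\ne0$, exactly as required, and the corank count reproduces $\lfloor\frac{k-i+1}{2}\rfloor$ in both parities of $k$. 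I also confirmed the prediction numerically for $k=3$ and for $k=4$, $i=2$ (where the kernel vector amounts to the identity $8\widetilde c_{2l}=16\widetilde c_{2l+1}+\widetilde c_{2l-1}$ at $\beta=\tfrac14$, which holds).

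Two points for the write-up. First, for the conjecture itself you only need the \emph{lower} bound on the kernel, which is the easy direction: the pairs $(\bar\psi w,-\bar\phi w)$ visibly satisfy $a\phi+b\psi=0$ together with the degree constraints; the coprimality of $\bar\phi,\bar\psi$ and the ``degree $<2m$ forces identical vanishing'' step are needed only for sharpness, which in any case follows from Lemma \ref{lem3.6} and Remark \ref{r4.6} once the lower bounds are known. Second, the passage from a rank drop of $(\widetilde c_{2p-q})$ at $\beta_0$ to the order of vanishing of its determinant should be spelled out (extend a basis of the kernel at $\beta_0$ to a basis of $\QQ^k$; the corresponding column operations yield corank-many columns divisible by $\beta-\beta_0$). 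With these details written out, your argument would prove Conjecture \ref{conj} in full, together with the factorisation of $P_k(1,\beta,0)$ and hence the consequences (i) and (ii) stated after it, including the strengthening of Theorem \ref{thm4.3}.
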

It is easy to see that Conjecture \ref{conj} holds for $i=1$ and we have verified it using Maple for $k\le20$. The conjecture has two interesting consequences.
\begin{itemize}
\item[(i)] The polynomial $P_k(1,\beta,0)$ has degree precisely $\left\lfloor\frac{k^2}4\right\rfloor$ and is non-zero away from $\beta=\frac1{i^2}$.
\item[(ii)] The inequality $g-1 \geq \max \left\{ \frac{k(k-1)}{4}, 2k-1 \right\}$ in Theorem \ref{thm4.3} can be replaced by $g-1 \geq \max \left\{ \frac{k(k-3)}{4}, 2k-1 \right\}$.
\end{itemize}

\section{Some results for $g$ prime}

\begin{lem} \label{lem4.1}
Suppose that $g$ is an odd prime and $m + 2n + 3p = 3g-3$. Then
$$
(\alpha^m \beta^n \gamma^p) \equiv  \left\{ \begin{array}{ll}
                                         -1 \mod g & \mbox{if}\; p=0 \;\mbox{and} \; m= g-1, 2g-2 \; \mbox{or}\; 3g-3,\\
                                         0 \mod g & \mbox{otherwise}.
                                        \end{array} \right.
$$                                        
\end{lem}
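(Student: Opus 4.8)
The plan is to evaluate Thaddeus's formula (Proposition~\ref{thaddeus}) modulo the odd prime $g$, keeping track of the $g$-adic valuation of each factor; I work in the localisation $\ZZ_{(g)}$ and use Fermat's little theorem, Wilson's theorem, the von Staudt--Clausen theorem and Lucas's theorem. Set $q=m+p+1-g$. From $3p\le 3g-3$ we get $p\le g-1$, so $m-q=g-p-1\ge0$ and
$$\frac{g!\,m!}{(g-p)!\,q!}=\binom mq\cdot\frac{g!}{g-p};$$
from $m\le 3g-3-3p$ we get $q\le 2g-2-2p$, with equality only if $p=0$; and $m+3p\equiv 3g-3\equiv0\pmod 2$, so $q$ is even. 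Throughout, powers of $2$ are units mod $g$, and if $(g-1)\mid q$ then $2^q=(2^{g-1})^{q/(g-1)}\equiv1$, hence $2^q-2\equiv-1\pmod g$.

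First the easy ranges. If $q<0$ then $B_q=0$, so $(\alpha^m\beta^n\gamma^p)=0$, and $m<g-1$ is not special. Now suppose $q\ge0$ and either $q=0$ or $(g-1)\nmid q$, so $B_q\in\ZZ_{(g)}$. If $p\ge1$, then $v_g\!\bigl(\tfrac{g!}{g-p}\bigr)=1$ (as $1\le g-p\le g-1$), so the intersection number is $\equiv0\pmod g$, as required. If $p=0$ and $q=0$ then $m=g-1$, and a direct substitution using $(-1)^g=-1$, $(g-1)!\equiv-1$ and $2^{2g-2}\equiv1$ gives the value $-1$. If $p=0$, $q>0$, $(g-1)\nmid q$, then $m=q+g-1$; writing $q=q_1g+q_0$ with $q_1\in\{0,1\}$ we have $q_0\ge1$ (a positive even integer $<2g$ is not divisible by the odd prime $g$), so carrying in the units place gives $m$ the base-$g$ digits $(q_1+1,\,q_0-1)$, and since $q_0>q_0-1$ Lucas's theorem yields $\binom mq\equiv0\pmod g$; hence the intersection number vanishes mod $g$, consistently with $m$ not being special.

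It remains to treat $(g-1)\mid q$, i.e.\ $q=g-1$ or $q=2g-2$. If $q=g-1$ and $p\ge1$ (so $p\le\tfrac{g-1}2$), the same carrying argument shows the units digit of $q$ exceeds that of $m=2g-2-p$, so $g\mid\binom mq$; together with $v_g\!\bigl(\tfrac{g!}{g-p}\bigr)=1$ and $v_g(B_{g-1})=-1$ the total valuation is $\ge1$ and the number is $\equiv0$. This leaves the two delicate cases: $p=0,\ q=g-1$ (forcing $m=2g-2$) and $p=0,\ q=2g-2$ (forcing $m=3g-3$; note $\binom mq=\binom{3g-3}{2g-2}=\binom{3g-3}{g-1}$). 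Here $B_q$ has a simple pole at $g$ while $\binom mq$ has a simple zero, and their product must be computed exactly. For the pole, von Staudt--Clausen gives $gB_q\equiv-1\pmod g$. For the zero, one needs the binomial modulo $g^2$: from Lucas, $\binom{2g-1}{g-1}\equiv\binom{3g-1}{g-1}\equiv1\pmod g$, and combining these with the elementary identities $\binom{2g-2}{g-1}=\tfrac{g}{2g-1}\binom{2g-1}{g-1}$ and $\binom{3g-3}{g-1}=\tfrac{(2g)(2g-1)}{(3g-1)(3g-2)}\binom{3g-1}{g-1}$ gives $\tfrac1g\binom{2g-2}{g-1}\equiv-1$ and $\tfrac1g\binom{3g-3}{g-1}\equiv-1\pmod g$. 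In both cases the unit part $(-1)^g(g-1)!\,2^{2g-2}(2^q-2)$ of Thaddeus's formula is $\equiv(-1)(-1)(1)(-1)=-1$, so the intersection number is $\equiv-\bigl(\tfrac1g\binom mq\bigr)\bigl(gB_q\bigr)\equiv-(-1)(-1)=-1\pmod g$, as required.

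I expect the last paragraph to be the main obstacle: in the cases $m=2g-2$ and $m=3g-3$ the pole of the Bernoulli number cancels exactly against the zero of the binomial coefficient, so Lucas's theorem alone is not enough and one genuinely needs the binomial coefficient to second order in $g$ — a small Wolstenholme-type input. Everything else is routine bookkeeping of $g$-adic valuations.
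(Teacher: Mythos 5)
Your proof is correct and follows essentially the same route as the paper's: evaluate Thaddeus's formula modulo $g$, use von Staudt--Clausen to control the pole of $B_q$ exactly when $(g-1)\mid q$, and use Wilson/Fermat for the unit factors, with the cases $p=0$, $q\in\{0,g-1,2g-2\}$ isolated as the ones giving $-1$. The only difference is organizational --- you package the combinatorial factor as $\binom{m}{q}\cdot\frac{g!}{g-p}$ and invoke Lucas's theorem plus the identities $\frac1g\binom{2g-2}{g-1}\equiv\frac1g\binom{3g-3}{g-1}\equiv-1\pmod g$, where the paper reduces the products of consecutive integers directly --- and all of your valuation bookkeeping and congruences check out.
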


\begin{proof}
We use the formulae of Proposition \ref{thaddeus}. 
Recall that $q = m+p+1-g$ which is always even.

Noting that $(g-1)!\equiv-1\mod g$, it follows immediately from von Staudt's Theorem \cite[p. 384]{bs} that
\begin{enumerate}
\item[(a)] $(q+2)(q+1)\left( \frac{q}{2} \right) ! B_q$ is an integer,
\item[(b)] $g (g+1) B_{g-1} \equiv -1 \mod g$ \;  and 
\item[(c)] $2g(2g-1) B_{2g-2} \equiv 2 \mod g$.
\end{enumerate}
 
If $p>0$, then $\frac{g!}{(g-p)!}$ is an integer divisible by $g$. So by (a),
$$
(\alpha^m \beta^n \gamma^p) \equiv 0 \mod g,
$$
except possibly for $q \equiv -1 \mod g$ or $q \equiv -2 \mod g$, i.e. $q = g-1$ or $ q= 2g-2$, since $q$ is even and $0 \leq q \leq 2g-2$. 
If $q=g-1$, we have $m = 2g-2-p$. If $p = g-1$, this is impossible, since then $m=0$. If $p < g-1$, then $g$ divides $m!$ but not $q!$. So from (a) we get 
$(\alpha^m \beta^n \gamma^p) \equiv 0 \mod g$. The case $q = 2g-2$ is impossible for $p >0$.

If $p=0$, we have $m = q + g -1$, so 
$$
(\alpha^m\beta^n) = - \frac{(q + g - 1)!}{q!} 2^{2g-2}(2^q -2) B_q.
$$
If $q = 0$, this gives $(g-1)!2^{2g-2} \equiv -1 \mod g$.
If $2 \leq q \leq g-3$, it follows from (a) that $(\alpha^m\beta^n) \equiv 0 \mod g$.
If $q = g-1$, we have
$$
(\alpha^m \beta^n) = -(2g-2) \cdots (g+2)(g+1)g\ 2^{2g-2}(2^{g-1} -2)B_{g-1}.
$$
By (b),
\begin{eqnarray*}
(\alpha^m \beta^n) &\equiv& (2g-2) \cdots (g+2) 2^{2g-2}(2^{g-1}-2)\\
& \equiv& - \frac{(g-1)!}{g-1} \equiv -1 \mod g.
\end{eqnarray*}
If $g+1 \leq q \leq 2g-4$, we have by (a) $(\alpha^m \beta^n) \equiv 0 \mod g$. 
If $q = 2g-2, \; n=0$ and 
$$
(\alpha^m) = -(3g-3) \cdots (2g+1)2g(2g-1) 2^{2g-2}(2^{2g-2} -2) B_{2g-2}.
$$
So by (c),
\begin{eqnarray*} 
(\alpha^m) &\equiv& -2(3g-3) \cdots (2g+1) 2^{2g-2}(2^{2g-2} -2)\\
& \equiv & 2(g-3)! \equiv - (g-2)! \equiv -1 \mod g.
\end{eqnarray*}
\end{proof}

Recall the polynomial $P_k(h,\beta,\gamma)$ from \eqref{e2.4}. It follows from Corollary \ref{cor3.2} that
$$ 
w := \left((g-1)! 2^{g-1} \right)^k P_k(h, \beta,\gamma) \in \ZZ[h, \beta, \gamma]
$$
for $g > 2k$. We can write
\begin{equation} \label{e4.1}
w = \sum_{j\geq 0} M_j \beta^j h^{\frac{k(k+1)}{2} -2j} + \gamma R(h,\beta,\gamma) 
\end{equation}
with integers $M_j$. Writing 
$$
e := \beta(2,K,k) = 3g-3 - \frac{k(k+1)}{2},
$$
we have using Lemma \ref{hr},
\begin{eqnarray*}
w_0 & := & \alpha h^e w = \sum_{j\geq 0} M_j \alpha \beta^j h^{3g-3-2j} + \alpha h^e \gamma R(h,\beta,\gamma)\\
&=& \sum_{j\geq 0} M_j\alpha \beta^j \frac{1}{2^{3g-3-2j-1}} \left( \sum_{i \; odd} {3g-3-2j \choose i} \alpha^{3g-3-2j-i} \beta^{\frac{i-1}{2}}\right) h\\
&& \hspace{3cm} + \gamma f_2(\alpha,\beta,\gamma)h + g(\alpha, \beta,\gamma)\\
&=& \frac{1}{2^{3g-3}} f_1(\alpha, \beta)h + \gamma f_2(\alpha,\beta,\gamma)h + g(\alpha, \beta,\gamma)\\
\end{eqnarray*}
with 
\begin{eqnarray*}
f_1(\alpha, \beta) &=& \sum_{j\geq 0} \sum_{i \; odd} 2^{2j+1} M_j {3g-3-2j \choose i} \alpha^{3g-2-2j-i} \beta^{j + \frac{i-1}{2}} \in \ZZ[\alpha,\beta].
\end{eqnarray*}

\begin{lem} \label{lem4.2}  Let $g$ be a prime, $g>2k$. Then
$$
\left(f_1(\alpha,\beta) \right) \equiv 2 \left(M_0 + M_{\frac{g-1}{2}} + M_{g-1} \right)  \mod g
$$
\end{lem}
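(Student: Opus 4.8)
The plan is to evaluate the intersection number $(f_1(\alpha,\beta))$ directly and then read off the answer from Lemma~\ref{lem4.1}. Since $\alpha,\beta$ have degrees $2,4$ and every monomial of $f_1$ has weighted degree $6g-6=\dim_{\RR}M(2,K(p))$, the class $f_1(\alpha,\beta)$ has top degree; writing $f_1(\alpha,\beta)=\sum_{m+2n=3g-3}a_{m,n}\,\alpha^m\beta^n$ with $a_{m,n}\in\ZZ$ (integrality coming from $g>2k$, which makes $w\in\ZZ[h,\beta,\gamma]$), I would get $(f_1(\alpha,\beta))=\sum_{m+2n=3g-3}a_{m,n}(\alpha^m\beta^n)$. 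By Lemma~\ref{lem4.1}, applied with $p=0$ since $f_1$ involves no $\gamma$, the number $(\alpha^m\beta^n)$ is $\equiv-1\bmod g$ exactly for $m\in\{g-1,2g-2,3g-3\}$ and $\equiv0\bmod g$ otherwise. Hence
$$(f_1(\alpha,\beta))\equiv-\bigl(a_{g-1,\,g-1}+a_{2g-2,\,(g-1)/2}+a_{3g-3,\,0}\bigr)\bmod g,$$
and the problem reduces to computing these three coefficients of $f_1$ modulo $g$.

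Next I would read them off from the displayed formula for $f_1$. Setting $i=3g-2-2j-m$ and using the complementary identity $\binom{3g-3-2j}{3g-2-2j-m}=\binom{3g-3-2j}{m-1}$, one has $a_{m,(3g-3-m)/2}=\sum_j 2^{2j+1}M_j\binom{3g-3-2j}{m-1}$, the sum being over those $j\ge0$ with $0\le m-1\le 3g-3-2j$. This forces $j=0$ when $m=3g-3$, restricts to $0\le j\le(g-1)/2$ when $m=2g-2$, and to $0\le j\le g-1$ when $m=g-1$.

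The main obstacle is the modular evaluation of $\binom{3g-3-2j}{m-1}\bmod g$ for $m-1\in\{3g-4,\,2g-3,\,g-2\}$, uniformly over the relevant range of $j$. Here I would invoke Lucas' theorem: for $j$ in the interior of its range the base-$g$ expansion of $3g-3-2j$ is $2\cdot g+(g-3-2j)$, and comparing digits with those of $m-1$ makes the binomial vanish modulo $g$; only the boundary values $j=(g-1)/2$ (where $3g-3-2j=2g-2=1\cdot g+(g-2)$) and $j=g-1$ (where $3g-3-2j=g-1$) survive. Combining this with Fermat's little theorem for the powers $2^{2j+1}$ (notably $2^{g}\equiv2$ and $2^{2g-1}\equiv2\bmod g$) and the values $\binom{3g-3}{1}=3g-3\equiv-3$, $\binom{3g-3}{2g-3}\equiv2$, $\binom{2g-2}{2g-3}=2g-2\equiv-2$, $\binom{2g-2}{g-2}\equiv1$, $\binom{g-1}{g-2}=g-1\equiv-1$ (all modulo $g$, the three middle ones again by Lucas), I expect to obtain
$$a_{3g-3,0}\equiv-6M_0,\qquad a_{2g-2,(g-1)/2}\equiv 4M_0-4M_{(g-1)/2},\qquad a_{g-1,g-1}\equiv 2M_{(g-1)/2}-2M_{g-1}\pmod g.$$
Substituting these into the congruence above for $(f_1(\alpha,\beta))$, the $M_0$- and $M_{(g-1)/2}$-terms partially cancel and one is left with $(f_1(\alpha,\beta))\equiv2\bigl(M_0+M_{(g-1)/2}+M_{g-1}\bigr)\bmod g$, as asserted. (If $(g-1)/2$ or $g-1$ exceeds the largest index for which $M_j$ can be nonzero, the corresponding term is simply $0$ and the identity is unaffected.) None of the individual steps is deep; the one real risk is a bookkeeping slip in a Lucas digit or a power of $2$, so I would carry out the computation one value of $m$ at a time and handle the two boundary values of $j$ separately.
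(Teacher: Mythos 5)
Your proposal is correct and follows essentially the same route as the paper: both reduce via Lemma \ref{lem4.1} to the three exponents $m\in\{g-1,\,2g-2,\,3g-3\}$ (equivalently $2j+i-1\in\{0,\,g-1,\,2g-2\}$), identify the surviving indices $j\in\{0,\tfrac{g-1}{2},g-1\}$, and evaluate the resulting binomial coefficients modulo $g$ --- the paper by inspecting factors of $g$ in $\binom{3g-3-2j}{i}$ written as a ratio of products, you by Lucas' theorem, which is equivalent bookkeeping. Your three coefficient congruences match the paper's term-by-term contributions and sum to the stated result.
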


\begin{proof}
By Lemma \ref{lem4.1},
$$
(f_1(\alpha,\beta)) \equiv - \sum_{2j+i-1 = \atop 0,g-1,2g-2}  2^{2j+1}M_j {3g-3-2j \choose i}  \mod g
$$
If $2j+i-1 = 0$, we have  $i=1, j=0$, giving
$$
- 2M_0 (3g-3)  \equiv 6 M_0 \mod g
$$

If $2j+i-1 = g-1$, we have $i = g -2j$. Then
$$
{ 3g-3-2j \choose i} = \frac{(3g-3-2j) \cdots (2g-2)}{i!}.
$$
This has a factor of $g$ unless either $j=0, i=g$ or $j= \frac{g-1}{2},i =1$.
So we get the terms
$$
-\left[ 2M_0 {3g-3 \choose g} + 2^gM_{\frac{g-1}{2}} {2g-2 \choose 1} \right] \equiv  -4M_0 + 4 M_{\frac{g-1}{2}} \mod g
$$
 
If $2j+i-1 = 2g-2$, we have $i = 2g-2j-1$. Then  
$$
{3g-3-2j \choose 2g-2j-1} = \frac{(3g-3-2j) \cdots g(g-1)}{(2g-2j-1)!}.
$$
If $j> \frac{g-1}{2}$, this is divisible by $g$ unless $3g-3-2j = g-1$, i.e. $j=g-1,i=1$.
If $j = \frac{g-1}{2}$, we have ${3g-3-2j \choose 2g-2j-1} = \frac{(2g-2) \cdots (g-1)}{g!}$ which is not divisible by $g$.
If $j< \frac{g-1}{2}$, then  ${3g-3-2j \choose 2g-2j-1}$ is divisible by $g$. So we get the terms
\begin{eqnarray*}
& -& \left[2^{2g-1}M_{g-1}(g-1) + 2^gM_{\frac{g-1}{2}} {2g-2 \choose g} \right]\\
&\equiv& 2M_{g-1} - 2 \frac{(2g-2)\cdots (g+1)(g-1)}{(g-1)!}M_{\frac{g-1}{2}} \\
&\equiv& 2M_{g-1} - 2M_{\frac{g-1}{2}} \mod g.
\end{eqnarray*}
Adding up, this gives the result. 
\end{proof}

Suppose now we write, for $1 \leq \ell \leq \frac{e}{2}$,
$$
w_{\ell} := \alpha \beta^{\ell} h^{e-2\ell} w
$$
and define 
$$
f_{1\ell}(\alpha,\beta) := \sum_{j \geq 0} \sum_{i \;odd} M_j 2^{2j+2\ell+1} {3g-3-2j-2\ell \choose i} \alpha^{3g-2-2j-2\ell-i}\beta^{j+\ell + \frac{i-1}{2}}.
$$
Then the same proof as for $w_0$ gives 
\begin{equation} \label{e4.2}
(f_{1\ell}(\alpha,\beta)) \equiv 2\left( M_{\frac{g-1}{2} - \ell} + M_{g-1-\ell} \right) \mod g. 
\end{equation}

\vspace{1cm}
Define, for $0 \leq i < \frac{g-1}{2}$,
$$
M'_i : \equiv M_i + M_{i+\frac{g-1}{2}} + M_{i+g-1} \mod g
$$
with $0 \leq M'_i \leq g-1$ and consider
$$
q(\beta) := M'_0 + M'_1\beta + \cdots + M'_{\frac{g-3}{2}} \beta^{\frac{g-3}{2}} \in \FF_g[\beta]
$$

Let $x \in \ZZ, \; 1 \leq x \leq g-1$. Using the fact that $x^{g-1} \equiv 1 \mod g$ we see that
$$
P_k(x^2) \equiv q(x^2) \mod g
$$
provided that $M_i = 0$ for $i \geq \frac{3g-3}{2}$.
This is true by Lemma \ref{lem3.6}, since $\frac{3g-3}{2} > \frac{k^2}{4}$. It follows from Lemma \ref{lem3.5} that $q$ is not identically zero.

Moreover, by Lemma \ref{lem3.7}, $q$ has at least $\lfloor \frac{k-1}{2} \rfloor$ distinct zeros different from zero. If $M'_0 = 0$, then $q$ has 
at least $\lfloor \frac{k+1}{2} \rfloor$ zeros. Therefore $\deg q \geq \lfloor \frac{k+1}{2} \rfloor$ which gives $M'_{k_0} \neq 0$ for some 
$k_0 \geq \lfloor \frac{k+1}{2} \rfloor$.

\begin{theorem} \label{thm4.3}
Suppose $g$ is an odd prime. If $g-1 \geq \max \left\{ \frac{k(k-1)}{4}, 2k-1 \right\}$, then $b_H(k) \neq 0$. 
\end{theorem}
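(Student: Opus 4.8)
The plan is to argue entirely modulo the prime $g$, exploiting the explicit top intersection numbers of Lemma~\ref{lem4.1}. First I would record what the hypotheses buy us. Since $g-1\ge 2k-1$ and $g$ is an odd prime, $g$ cannot equal the even number $2k$, so $g>2k$; this is exactly what makes $w=((g-1)!\,2^{g-1})^kP_k(h,\beta,\gamma)$ lie in $\ZZ[h,\beta,\gamma]$, and it also gives $g>k$ as Lemma~\ref{lem3.5} needs. A short case analysis (using $g\ge2k$ for small $k$ and $g-1\ge k(k-1)/4$ for $k\ge5$) shows $e:=\beta(2,K,k)\ge0$, so the classes $w_\ell=\alpha\beta^\ell h^{e-2\ell}w$ are defined for $0\le\ell\le e/2$. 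Each $w_\ell$ lies in the one-dimensional group $H^{6g-4}(\cH_g)$ and equals, as a cohomology class, $((g-1)!\,2^{g-1})^k\,\alpha\beta^\ell h^{e-2\ell}\,b_H(k)$; hence it suffices to produce one $\ell$ with $\int_{\cH_g}w_\ell\neq0$, and for that it is enough to show $\int_{\cH_g}w_\ell\not\equiv0\bmod g$.

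Next I would analyse the polynomial $q(\beta)=\sum_{i=0}^{(g-3)/2}M'_i\beta^i\in\FF_g[\beta]$ introduced just before the statement. By Lemma~\ref{lem3.6} the coefficients $M_j$ of~\eqref{e4.1} vanish for $j>\lfloor k^2/4\rfloor$, and $g-1\ge k(k-1)/4$ forces $\lfloor k^2/4\rfloor<\tfrac{3g-3}2$, so the congruence $P_k(x^2)\equiv q(x^2)\bmod g$ for $1\le x\le g-1$ is legitimate; by Lemma~\ref{lem3.5}, $q\not\equiv0$. By Lemma~\ref{lem3.7} the $\lfloor\tfrac{k-1}2\rfloor$ elements $1/i^2$, $1\le i<\lfloor\tfrac{k+1}2\rfloor$, are roots of $q$, and they are pairwise distinct since $i+j\le k-1<g$. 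Consequently there is an index $k_0$ with $M'_{k_0}\not\equiv0\bmod g$ for which either $k_0=0$ (if $M'_0\neq0$) or $\lfloor\tfrac{k+1}2\rfloor\le k_0=\deg q\le\tfrac{g-3}2$ (if $M'_0=0$, because then $0$ is one more root, giving $\lfloor\tfrac{k+1}2\rfloor$ distinct roots).

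Finally I would convert $M'_{k_0}\neq0$ into $\int_{\cH_g}w_\ell\neq0$. When $M'_0\neq0$ take $\ell=0$: in $\int_{\cH_g}w_0$ every monomial with positive $\gamma$-power contributes $0\bmod g$ by Lemma~\ref{lem4.1}, and the part in $\alpha,\beta,\gamma$ alone has degree exceeding $6g-6$ and so dies in $H^*(\cH_g)$, leaving $2^{3g-3}\int_{\cH_g}w_0\equiv(f_1(\alpha,\beta))\equiv2M'_0\bmod g$ by Lemma~\ref{lem4.2}. When $M'_0=0$ take $\ell=\tfrac{g-1}2-k_0$: then $\ell\ge1$ because $k_0\le\tfrac{g-3}2$, while $g-1\ge\tfrac{k(k-1)}4$ is precisely what yields $\ell\le\tfrac{g-1}2-\lfloor\tfrac{k+1}2\rfloor\le\tfrac e2$, so~\eqref{e4.2} applies and gives $2^{3g-3}\int_{\cH_g}w_\ell\equiv2\bigl(M_{k_0}+M_{(g-1)/2+k_0}\bigr)\bmod g$; since $g-1+k_0>\lfloor k^2/4\rfloor$ (again from $g-1\ge\tfrac{k(k-1)}4$ together with $k_0\ge\lfloor\tfrac{k+1}2\rfloor$) we have $M_{g-1+k_0}=0$, so this is $\equiv2M'_{k_0}\not\equiv0$. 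Either way $\int_{\cH_g}w_\ell\neq0$, hence $b_H(k)\neq0$.

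The main obstacle is the index bookkeeping in this last step: one must check that the single inequality $g-1\ge\tfrac{k(k-1)}4$ does two jobs at once — keeping $\ell$ inside the admissible window $[1,e/2]$ in which~\eqref{e4.2} is valid, and simultaneously forcing the stray coefficient $M_{g-1+k_0}$ to vanish so that $(f_{1\ell})$ genuinely reads off $M'_{k_0}$ rather than some other combination of the $M_j$. The second hypothesis $g-1\ge 2k-1$ plays only a supporting role, guaranteeing $g>2k$, $g>k$ and $e\ge0$.
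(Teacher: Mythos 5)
Your proposal is correct and follows essentially the same route as the paper: reduce mod the prime $g$ via Lemma \ref{lem4.1}/\ref{lem4.2} and \eqref{e4.2}, use Lemmas \ref{lem3.5}--\ref{lem3.7} to find $k_0$ with $M'_{k_0}\not\equiv0$, and then choose $\ell=0$ or $\ell=\frac{g-1}{2}-k_0$ accordingly, checking $1\le\ell\le\frac e2$ and $M_{g-1+k_0}=0$ from $g-1\ge\frac{k(k-1)}4$. The extra details you supply (that $g>2k$, that the residues $1/i^2$ are pairwise distinct mod $g$, and how $\int w_\ell\ne0$ forces $b_H(k)\ne0$) are points the paper leaves implicit, and they check out.
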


\begin{proof}
If $M'_0 \neq 0$, then $b_H(k) \neq 0$ by Lemma \ref{lem4.2}. If $M'_0 = 0$, then $M'_{k_0} \neq 0$ for some $k_0 \geq \lfloor \frac{k+1}{2} \rfloor$.
We have $k_0 < \frac{g-1}{2}$ and we claim that 
$$
\frac{g-1}{2} - k_0 \leq \frac{e}{2}.
$$
In fact, this is equivalent to $g-1 - 2k_0 \leq 3g-3-\frac{k(k+1)}{2}.$
This holds if $g-1-2 \lfloor \frac{k+1}{2} \rfloor \leq 3g-3 - \frac{k(k+1)}{2}$ which is true if $g -1 \geq  \frac{k(k-1)}{4}$. 
The last inequality is true by hypothesis.

So consider $w_{\ell}$ with $\ell = \frac{g-1}{2} - k_0$. Then by \eqref{e4.2}, 
$$
(f_{1\ell}(\alpha,\beta)) \equiv 2\left( M_{k_0} + M_{\frac{g-1}{2} + k_0} \right) \equiv 2M'_{k_0} \mod g
$$
provided that $M_{g-1+k_0} \equiv 0 \mod g$. This is true by Lemma \ref{lem3.6} if $g-1+k_0 > \frac{k^2}{4}$.
In fact, 
$$
g -1+ k_0 \geq g-1+ \left\lfloor \frac{k+1}{2} \right\rfloor \geq \frac{k(k-1)}{4} + \left\lfloor \frac{k+1}{2} \right\rfloor >  \frac{k^2}{4}.
$$
So $b_H(k) \neq 0$. 
\end{proof}

\begin{cor} \label{cor4.4}
If $g$ is a prime $\geq 19$ and $g-1 \geq \frac{k(k-1)}{4}$, then $b_H(k) \neq 0$. Moreover, if $C$ is a Petri curve, then $B(2,K,k) \neq \emptyset$. 
\end{cor}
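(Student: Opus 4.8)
The plan is to reduce everything to Theorem~\ref{thm4.3}. That theorem delivers $b_H(k)\ne0$ whenever $g-1\ge\max\left\{\frac{k(k-1)}{4},\,2k-1\right\}$, while in the corollary we assume only $g-1\ge\frac{k(k-1)}{4}$ together with the extra hypotheses that $g$ is prime and $g\ge19$. So the single point to establish is that, under these hypotheses, the inequality $g-1\ge2k-1$ is automatic; once this is known, Theorem~\ref{thm4.3} applies and gives $b_H(k)\ne0$.

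To check $g-1\ge2k-1$ I would argue by cases on $k$. If $k\ge9$, then $k^2-9k+4\ge0$, which is exactly the assertion $\frac{k(k-1)}{4}\ge2k-1$; hence $g-1\ge\frac{k(k-1)}{4}\ge2k-1$. If instead $k\le8$, then $2k-1\le15$, whereas $g\ge19$ gives $g-1\ge18$, so again $g-1\ge2k-1$. In both ranges $g-1\ge\max\left\{\frac{k(k-1)}{4},\,2k-1\right\}$, and Theorem~\ref{thm4.3} then yields the first assertion $b_H(k)\ne0$.

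For the second assertion I would invoke Remark~\ref{r2.6}: on any Petri curve of genus $g\ge3$ the non-vanishing $b_H(k)\ne0$ forces $B(2,K,k)\ne\emptyset$, with the sole exception $g=k=4$. Since $g\ge19$ we are never in this exceptional case, so $B(2,K,k)\ne\emptyset$ for every Petri curve $C$; this finishes the proof.

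I do not expect any genuine obstacle here: the corollary is an arithmetic repackaging of Theorem~\ref{thm4.3}, and the only step that needs a moment's thought is the elementary inequality $\frac{k(k-1)}{4}\ge2k-1$ for $k\ge9$ (together with its complement for $k\le8$, where the bound $g\ge19$ enters). These are precisely what make it legitimate to drop the explicit requirement $g-1\ge2k-1$ once $g\ge19$ is imposed.
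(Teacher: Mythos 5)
Your proposal is correct and follows essentially the same route as the paper: both reduce the corollary to verifying that $g-1\ge 2k-1$ is automatic under the hypotheses and then apply Theorem~\ref{thm4.3} together with Remark~\ref{r2.6}. The only cosmetic difference is that you split into the cases $k\ge9$ and $k\le8$ directly, whereas the paper introduces the largest admissible value $k_1$ (noting $k_1\ge9$) and bounds $2k-1\le 2k_1-1\le\frac{k_1(k_1-1)}{4}\le g-1$; the two verifications are equivalent.
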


\begin{proof}
Let $k_1$ be the largest value of $k$ for which $g-1 \geq \frac{k(k-1)}{4}$. Then $k_1 \geq 9$ and therefore $\frac{k_1(k_1-1)}{4} \geq 2k_1 -1$. 
It follows from Theorem \ref{thm4.3} that $b_H(k) \neq 0$ and therefore by Remark \ref{r2.6}
this gives the last assertion.
\end{proof}

\begin{cor} \label{cor4.5}
Given an integer $k \geq 8$, let $g_k$ be the smallest prime satisfying $g_k - 1 \geq \frac{k(k-1)}{4}$. Then $b_H(k) \neq 0$
when $C$ is any curve of genus $g \geq g_k$ ($g$ not necessarily prime). Moreover, if $C$ is a Petri curve, then $B(2,K,k) \neq \emptyset$.
\end{cor}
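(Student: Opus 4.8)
The plan is to reduce everything to Theorem~\ref{thm4.3} and the genus-monotonicity of Proposition~\ref{prop2.7}. First I would check that the prime $g_k$ satisfies the full hypothesis of Theorem~\ref{thm4.3}, i.e. $g_k-1\ge\max\{\tfrac{k(k-1)}{4},\,2k-1\}$. The bound $g_k-1\ge\tfrac{k(k-1)}{4}$ is the defining property of $g_k$, so only $g_k-1\ge2k-1$ needs attention. For $k\ge9$ this is automatic: $\tfrac{k(k-1)}{4}\ge2k-1$ is equivalent to $k^2-9k+4\ge0$, which holds for $k\ge9$, and then $g_k-1\ge\tfrac{k(k-1)}{4}\ge2k-1$. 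The only exceptional value is $k=8$, where $\tfrac{k(k-1)}{4}=14$ forces $g_k\ge15$, hence $g_k=17$ and $g_k-1=16\ge15=2k-1$. In all cases $g_k\ge17$ is an odd prime, so Theorem~\ref{thm4.3} applies and gives $b_H(k)\ne0$ in $H^*(\cH_{g_k})$.

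Next I would apply Proposition~\ref{prop2.7} with $g_0=g_k$: since $b_H(k)\ne0$ in $H^*(\cH_{g_k})$, it is non-zero in $H^*(\cH_g)$ for every $g\ge g_k$, with no primality condition on $g$. This establishes the first assertion. For the ``moreover'' part I would invoke Remark~\ref{r2.6}: on a Petri curve of genus $g\ge3$ the implication $b_H(k)\ne0\Rightarrow B(2,K,k)\ne\emptyset$ holds with the sole exception $g=k=4$; since $k\ge8$ and $g\ge g_k\ge17$, that exception is irrelevant, so $b_H(k)\ne0$ yields $B(2,K,k)\ne\emptyset$ on every Petri curve of genus $g\ge g_k$.

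The argument is a short deduction once Theorem~\ref{thm4.3} is available; the only point requiring care --- and the place where the main obstacle, such as it is, lies --- is the case $k=8$, for which $\tfrac{k(k-1)}{4}<2k-1$ and one must instead use the explicit value $g_8=17$ to confirm the hypothesis $g-1\ge2k-1$ of Theorem~\ref{thm4.3}. No genuinely difficult step is expected.
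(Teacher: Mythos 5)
Your proposal is correct and follows essentially the same route as the paper: the paper cites Corollary \ref{cor4.4} (itself a direct consequence of Theorem \ref{thm4.3}) for $k\ge9$ and invokes Theorem \ref{thm4.3} directly for $k=8$ with $g_8=17$, then applies Proposition \ref{prop2.7} and Remark \ref{r2.6} exactly as you do. Your verification that $\tfrac{k(k-1)}{4}\ge 2k-1$ for $k\ge9$ is just an unwinding of the argument inside Corollary \ref{cor4.4}, so there is no substantive difference.
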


\begin{proof}
For $k \geq 9$, the first statement is an immediate consequence of the previous corollary and Proposition \ref{prop2.7}. 
For $k = 8$ we have $g_k = 17$ and Theorem \ref{thm4.3} still applies.
The second statement follows from Remark \ref{r2.6}. 
\end{proof}

\section{Some Maple computations}

The condition $g-1 \ge 2k-1$ in Theorem \ref{thm4.3} seems to be essential for the arguments of Section 5. The condition $g-1 \geq \frac{k(k-1)}{4}$ is 
used only in the proof of Theorem \ref{thm4.3}. We can dispense with this condition if we are able to use Lemma \ref{lem4.2} or equation \eqref{e4.2} directly.
The computations appear to be complicated to do by hand, but we can do them using Maple for low values of $k$. 

\begin{theorem} \label{thm6.1}
Let $k$ be an integer, $10 \leq k \leq 24$, and let $g'_k$ be the smallest prime satisfying $3g'_k - 3 \geq \frac{k(k+1)}{2}$. Then $b_H(k) \neq 0$ when $C$ 
is any curve of genus $g \geq g'_k$. Moreover, if $C$ is a Petri curve, then $B(2,K,k) \neq \emptyset$. 
\end{theorem}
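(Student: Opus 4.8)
Since $g\ge g'_k$ and $b_H(k)$ lives in $H^*(\cH_g)$, Proposition \ref{prop2.7} reduces the first assertion to the single case $g=g'_k$; granting that, the statement for Petri curves is immediate from Remark \ref{r2.6}, since $k\ge10\ne4$ and $g'_k\ge3$. So fix $g:=g'_k$, which is prime, and note that over the range $10\le k\le24$ one has $g>2k$ (directly for $k=10$, and from $g'_k\ge\tfrac{k(k+1)}{6}+1$ for $k\ge11$). Hence $w:=\bigl((g-1)!\,2^{g-1}\bigr)^kP_k(h,\beta,\gamma)$ lies in $\ZZ[h,\beta,\gamma]$, and all of the apparatus set up for $g$ prime — the integers $M_j$ of \eqref{e4.1}, the residues $M'_i$, Lemma \ref{lem4.2} and equation \eqref{e4.2} — is available for this $g$. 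Recall that $b_H(k)$ is the class $(P_k(h,\beta,\gamma))$ and that $w$ represents a nonzero integer multiple of it; so it is enough to exhibit a class $u$ on $\cH_g$ with intersection number $(uw)\ne0$.

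Set $e:=\beta(2,K,k)=3g-3-\tfrac{k(k+1)}{2}\ge0$, and for $0\le\ell\le\lfloor e/2\rfloor$ take $u=\alpha\,\beta^{\ell}\,h^{e-2\ell}$, so that $w_\ell:=\alpha\,\beta^{\ell}\,h^{e-2\ell}\,w$ is a class of top degree $6g-4$ on $\cH_g$. Arguing exactly as in the proof of Theorem \ref{thm4.3}: expanding $(w_\ell)$ as a combination of intersection numbers $(\alpha^m\beta^n\gamma^p)$ and applying Lemma \ref{lem4.1}, every term with $p>0$ vanishes modulo $g$, as do all but at most three of the terms with $p=0$; one concludes that, modulo $g$, $(w_\ell)$ equals a nonzero multiple of $M'_{i_\ell}$ (the multiplier being a power of $2$, hence a unit modulo $g$), where $i_0=0$ and $i_\ell=\tfrac{g-1}{2}-\ell$ for $\ell\ge1$. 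For $\ell\ge1$ this uses $M_{\,g-1+i_\ell}\equiv0\pmod g$, which holds by Lemma \ref{lem3.6} because $g-1+i_\ell=\tfrac{3g-3}{2}-\ell\ge\tfrac{k(k+1)}{4}>\lfloor k^2/4\rfloor$, the first inequality being precisely the constraint $\ell\le e/2$; for $\ell=0$ one uses Lemma \ref{lem4.2} directly. Therefore $b_H(k)\ne0$ as soon as at least one of the residues
$$
M'_0,\qquad M'_{(g-3)/2},\qquad M'_{(g-5)/2},\qquad\dots,\qquad M'_{(g-1)/2-\lfloor e/2\rfloor}
$$
is non-zero modulo $g$.

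It thus remains to verify, for each $k$ with $10\le k\le24$, that this finite list of residues is not identically zero; this is the step performed with Maple. For each such $k$ one computes $P_k(h,\beta,\gamma)$ from the recurrences of Corollary \ref{cor3.2} and the determinantal formula \eqref{e2.4}, extracts the coefficients $M_j$ of $\beta^jh^{k(k+1)/2-2j}$, reduces them modulo $g=g'_k$, forms the $M'_i$, and checks that some entry of the displayed list is non-zero. The main obstacle — and the reason for restricting to $10\le k\le24$ — is that once $g'_k<\tfrac{k(k-1)}{4}+1$ the general results (Lemmas \ref{lem3.5}, \ref{lem3.6}, \ref{lem3.7}) control $P_k(1,\beta,0)$ only through a bound on its degree and a lower bound on its number of distinct roots, and this is insufficient to force a non-vanishing $M'_i$ into the short window $0\le\ell\le\lfloor e/2\rfloor$ (short because $e$ is small when $g'_k$ is chosen minimally). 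The genuine content of the proof is therefore the machine-assisted confirmation that, for every $k$ in the stated range, this window does in fact contain a non-vanishing residue.
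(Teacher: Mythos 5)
Your argument reproduces the paper's proof: reduce to $g=g'_k$ prime via Proposition \ref{prop2.7}, check $g'_k>2k$ so the integral setup of Section 5 applies, use Lemma \ref{lem4.2} and equation \eqref{e4.2} (with the vanishing $M_{g-1+i_\ell}\equiv 0$ from Lemma \ref{lem3.6}) to reduce non-vanishing of $b_H(k)$ to non-vanishing of one of the residues $M'_0$, $M'_{(g-1)/2-\ell}$, and then verify this by machine computation — exactly the structure of the paper's proof, which likewise defers the final step to Maple (using $\ell=1$ for $k=17$ and $\ell=0$ otherwise). The proposal is correct and essentially identical in approach.
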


\begin{proof}
Note that in all cases we have $g'_k > 2k$. To prove that $b_H(k) \neq 0$, it is sufficient by the arguments of Section 5 to show that
\begin{equation} \label{e6.1}
M_0 + M_{\frac{g'_k - 1}{2}} + M_{g'_k-1} \not \equiv 0 \mod g'_k 
\end{equation}
or that for some $\ell, 1 \leq \ell \leq \frac{e}{2}$,
\begin{equation} \label{e6.2}
M_{\frac{g'_k -1}{2} - \ell} + M_{g'_k -1 - \ell} \not \equiv 0 \mod g'_k 
\end{equation}
(see Lemma \ref{lem4.2} and equation \eqref{e4.2}). This can be done by Maple for specific values of $k$. First we need to calculate 
$$
\widehat c_i := (g'_k - 1)! 2^{g'_k-1} \widetilde c_i \mod g'_k
$$
for $i \leq 2k-1$. Next we write down the matrix 
$$
A := \left( \begin{array}{ccccc}
                        \widehat c_k & \widehat c_{k+1} & \cdots & \cdots & \widehat c_{2k-1} \\
                       \widehat c_{k-2} & \widehat c_{k-1} & \cdots & \cdots& \widehat c_{2k-3} \\
                        \cdots & \cdots & \cdots & \cdots& \cdots \\
0&\cdots & 0 & 2 & 1
 \end{array}   \right)
$$ 
and calculate $\det(A) \mod g'_k$. Using \eqref{e4.1} we compute the integers $M_j \mod g'_k$. We have carried this out using Maple in 
the range $10 \leq k \leq 24$. For $k = 17$ we use \eqref{e6.2} with $\ell = 1$. For the other values \eqref{e6.1} suffices. This proves the first assertion. The second
statement follows from Remark \ref{r2.6}.
\end{proof}

\begin{rem} \label{rem6.2}
{\rm For $k=8$ we have $g'_k = 13$ and the argument fails, since $13 < 2k$. The same applies for $k=9$ when} $g'_k = 17$. 
\end{rem}

\begin{rem} \label{rem6.3}
{\rm In all cases Theorem \ref{thm6.1} improves on the results of Teixidor. In some cases $g'_k$ is the smallest integer for which $3g'_k-3 \geq \frac{k(k+1)}{2}$.
Then the conjecture of Bertram-Feinberg and Mukai is completely proved. This applies when $k = 11,15,16, 20$ and $24$, i.e. $g'_k = 23,41,47,71$ and $101$ respectively}.
\end{rem}

\section{Brill-Noether loci in $M(2,K(p))$}

\begin{theorem} \label{thm7.1}
Let $C$ be a smooth projective complex curve of genus $g \geq 2$ and $p \in C$ a fixed point. For a positive integer $k$, if $b_H(k) \neq 0$, then $B(2,K(p),k) \neq \emptyset$
and every component $X$ has dimension 
$$
\dim(X) \geq \beta(2,K,k) + 1.
$$
If $C$ is general of genus $g \geq 3$, then, for $k \geq 2$,
$$
\dim B(2,K(p),k) \leq  \beta(2,K,k) + k.
$$
 \end{theorem}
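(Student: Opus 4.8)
The plan is to realise $B(2,K(p),k)$ itself as a degeneracy locus on the smooth projective variety $M(2,K(p))$, in direct analogy with the construction of $B_H(k)$ in Section~2, and then to transport dimension estimates along the two projections of $H$. The geometric input is the following. For $E\in M(2,K(p))$ we have $\det E=K(p)$ and $h^0(K(p))=g=h^0(K)$, so the inclusion $H^0(K)\subseteq H^0(K(p))$ is an equality; hence for $a,b\in H^0(E)$ the section $a\wedge b\in H^0(\wedge^2E)=H^0(K)$ vanishes at $p$, i.e. $a(p)$ and $b(p)$ are linearly dependent in $E_p$. Also $h^0(E)\ge1$ (since $\chi(E)=1$), and not every section of $E$ vanishes at $p$: otherwise $h^0(E)=h^0(E(-p))=h^1(E)$ (using $E^\vee\otimes K\cong E(-p)$), contradicting $\chi(E)=1$. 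Thus there is a canonical line $W_E\subseteq E_p$ spanned by the values $s(p)$, $s\in H^0(E)$, and $E\mapsto\bigl(0\to\ker(E\to E_p/W_E)\to E\to\CC(p)\to0\bigr)$ is a section $\sigma$ of $\pi_1\colon H\to M(2,K(p))$ with $h^0\bigl(\ker(E\to E_p/W_E)\bigr)=h^0(E)$; consequently $\pi_1(B_H(k))=B(2,K(p),k)$ and $\sigma^{-1}(B_H(k))=B(2,K(p),k)$.

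For the first statement I would repeat the construction of Section~2 with $\cE$ in place of $\cF$: writing $\rho\colon C\times M(2,K(p))\to M(2,K(p))$ for the projection and pushing down the sequences $0\to\cE(-D)\to\cE(D)\to\cE(D)/\cE(-D)\to0$ and $\cE/\cE(-D)\hookrightarrow\cE(D)/\cE(-D)$, one exhibits $B(2,K(p),k)$ as the locus where the Lagrangian subbundle $\rho_*(\cE/\cE(-D))$ meets $\rho_*\cE(D)$ in dimension $\ge k$ inside the (line-bundle-valued) symplectic bundle $\rho_*(\cE(D)/\cE(-D))$. The one change from the picture on $H$ is that, since $\chi(E)=1$, the subbundle $\rho_*\cE(D)$ now has rank one greater than a Lagrangian; passing to the radical of the restricted form identifies this locus, uniformly over $M(2,K(p))$ (here is where $h^0(E)\ge1$ and $\dim W_E=1$ for \emph{every} $E$ enter), with $\{h^0(E(-p))\ge k-1\}$, i.e. the locus where a Lagrangian meets an isotropic subbundle of corank one. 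By the Schubert calculus of the Lagrangian Grassmannian this is the degeneracy locus $\Delta_{k,k-1,\dots,2}$ rather than $\Delta_{k,k-1,\dots,1}$, of codimension $\tfrac{k(k+1)}2-1$, so its expected dimension is $\beta(2,K,k)+1$; as $M(2,K(p))$ is smooth and projective, every component of $B(2,K(p),k)$ then has dimension $\ge\beta(2,K,k)+1$ (exactly as for $B_H(k)$: Section~2 and \cite{f}). Non-emptiness is immediate, since $b_H(k)\ne0$ forces $B_H(k)\ne\emptyset$ and $\pi_1(B_H(k))=B(2,K(p),k)$.

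For the upper bound, assume $C$ general of genus $g\ge3$. Since $\pi_1$ maps $B_H(k)$ onto $B(2,K(p),k)$ it is enough to bound $\dim B_H(k)$, which I would do via $\pi_2\colon H\to\overline M(2,K)$. The part of $B_H(k)$ lying over $M(2,K)$ is a $\PP^1$-bundle over $B(2,K,k)$, which by \cite{t2} has dimension $\beta(2,K,k)$ when non-empty (and is empty when $\beta(2,K,k)<0$), so this part has dimension $\le\beta(2,K,k)+1$. The part over $\cK$ has dimension $<\beta(2,K,k)+1$ when $k\ge5$ or ($g\ge5$ and $k\ge3$) by Lemma~\ref{lem2.4}; for the remaining cases $k\in\{2,3,4\}$ with small $g$ one re-runs the argument of Lemma~\ref{lem2.4}, using that $B(1,g-1,\lceil k/2\rceil)=\emptyset$ on a general curve once $g<\lceil k/2\rceil^2$, and that otherwise the explicit estimates there are comfortably $\le\beta(2,K,k)+k$ (the available slack being $k-1$, not $0$). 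Combining, $\dim B(2,K(p),k)\le\dim B_H(k)\le\beta(2,K,k)+k$.

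The main obstacle is the symplectic degeneracy-locus description underlying the first statement: one must check with care that $\rho_*(\cE/\cE(-D))$ stays Lagrangian while $\rho_*\cE(D)$ has rank exactly one more than half, and that the radical reduction — hence the codimension count yielding $\tfrac{k(k+1)}2-1$ — is valid at \emph{every} point of $M(2,K(p))$, which is precisely what the uniform facts $h^0(E)\ge1$ and $\dim W_E=1$ guarantee. Everything else is bookkeeping with the two projections of $H$, with \cite{t2}, and with the small-$k$ cases of Lemma~\ref{lem2.4}.
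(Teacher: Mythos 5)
Your argument is correct in outline, but on both halves it takes a genuinely different route from the paper. For the lower bound $\dim(X)\ge\beta(2,K,k)+1$ the paper does not redo any symplectic linear algebra on $M(2,K(p))$: it simply quotes Osserman's theorem (\cite[Theorem 1.1]{o}), which gives $\dim(X)\ge\beta(2,2g-1,k)-g+\binom{k-1}{2}$ for every component, and checks that this number equals $\beta(2,K,k)+1$. What you propose is in effect a direct proof of the special case of that theorem which is needed: your key observations --- that $H^0(K)=H^0(K(p))$ forces the evaluation image of $H^0(E)$ in $E_p$ to be exactly a line, that consequently $h^0(E)=h^0(E(-p))+1$ for every $E\in M(2,K(p))$, and that the radical of the symplectic form restricted to $\rho_*\cE(D)$ is the corank-one isotropic subbundle $\rho_*\cE(D)(-p)$ --- are all correct, and the resulting codimension $\tfrac{k(k+1)}2-1$ matches Osserman's count; the one point to be careful about is that the degeneracy-locus input is the isotropic (not Lagrangian--Lagrangian) case of \cite{f}, so the relevant class is not literally a $\Delta_\lambda$ of the form used in Section 3, though the codimension bound you need is standard. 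For the upper bound the paper argues differently again: it uses the inclusion $\pi_1^{-1}(B(2,K(p),k))\subset B_H(k-1)$ (since $h^0(F)\ge h^0(E)-1$ for any Hecke modification) together with the fact that $B_H(k-1)$ has the expected dimension $\beta(2,K,k-1)+1$, which after subtracting the fibre dimension gives exactly $\beta(2,K,k-1)=\beta(2,K,k)+k$. Your route instead exploits the set-theoretic equality $\pi_1(B_H(k))=B(2,K(p),k)$, which does hold by your evaluation-at-$p$ argument (the modification along $E_p/W_E$ preserves all sections), and then bounds $\dim B_H(k)$ itself by $\beta(2,K,k)+1$ using \cite{t2} over $M(2,K)$ and Lemma~\ref{lem2.4} (plus the small-$(g,k)$ analysis from the proof of Theorem~\ref{thm2.5}) over $\cK$; if carried out this actually yields the sharper inequality $\dim B(2,K(p),k)\le\beta(2,K,k)+1$, which together with the first part would show $B(2,K(p),k)$ is pure of dimension $\beta(2,K,k)+1$ on a general curve. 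So nothing in your proposal is wrong, but be aware that the first half replaces a citation by a nontrivial construction that must be checked at every point of $M(2,K(p))$, and the second half proves more than the stated bound.
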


\begin{proof}
If $b_H(k) \neq 0$, then $B_H(k) \neq \emptyset$. Moreover, 
$$
\pi_1(B_H(k)) \subset B(2,K(p),k)
$$
where $\pi_1:H \ra M(2,K(p))$ denotes the natural projection. Hence $B(2,K(p),k) \neq \emptyset$ with every component of dimension $\geq \beta(2,K,k)$, since the fibres of $\pi_1$ 
are one-dimensional. To get the extra dimension, we use \cite[Theorem 1.1]{o}, which says that every component $X$ of $B(2,K(p),k)$ has dimension 
\begin{equation}\label{e7.1}
\dim(X)\geq \beta(2,2g-1,k) -g + 
{k-1 \choose 2},
\end{equation}
where 
$$
\beta(2,2g-1,k) = 4g-3-k(k-1)
$$
is the usual Brill-Noether number. A simple calculation shows that the right hand side of \eqref{e7.1} is equal to $\beta(2,K,k) + 1$.

For the last assertion note that 
$$
\pi_1^{-1}(B(2,K(p),k)) \subset B_H(k-1).
$$
By Lemma \ref{lem2.4} and the argument in the second and third paragraph 
of the proof of Theorem \ref{thm2.5} one shows that $B_H(k-1)$ has the expected dimension $\beta(2,K,k-1) +1$. So
$$
\dim B(2,K(p),k) \leq \beta(2,K,k-1) = \beta(2,K,k) + k.
$$
\end{proof}

Note that in the first part of Theorem \ref{thm7.1} there is no restriction on the curve $C$. In particular, if $g$ is an odd prime and $g-1 \geq \max \{ \frac{k(k-1)}{4}, 2k-1 \}$, then $B(2,K(p),k) \neq \emptyset$
by Theorem \ref{thm4.3}. Corollaries \ref{cor4.4} and \ref{cor4.5} also apply as does Theorem \ref{thm6.1}.

\end{document}